\declaretheoremstyle[headfont   = \bfseries\sffamily,
                     notefont   = \normalfont,
                     spaceabove = 6pt plus 0pt minus 2pt]{plain}
\declaretheoremstyle[headfont   = \bfseries\sffamily,
                     notefont   = \normalfont,
                     spaceabove = 6pt plus 0pt minus 2pt]{definition}
\declaretheorem[style = plain, numberwithin = section]{theorem}
\declaretheorem[style = plain,      sibling = theorem]{corollary}
\declaretheorem[style = plain,      sibling = theorem]{lemma}
\declaretheorem[style = plain,      sibling = theorem]{proposition}
\declaretheorem[style = definition, sibling = theorem]{definition}
\declaretheorem[style = definition, sibling = theorem]{example}
\declaretheorem[style = definition, sibling = theorem]{notation}
\declaretheorem[style = remark,     sibling = theorem]{remark}
\crefname{observation}{Observation}{Observations}
\Crefname{observation}{Observation}{Observations}
\crefname{conjecture}{Conjecture}{Conjectures}
\Crefname{conjecture}{Conjecture}{Conjectures}
\crefname{notation}{Notation}{Notations}
\Crefname{notation}{Notation}{Notations}
\crefname{paper}{Paper}{Papers}
\Crefname{paper}{Paper}{Papers}
\newcommand{\x}{\mathbf{x}}
\newcommand{\y}{\mathbf{y}}
\title{Iterative Desingularization}
\author{
  Vegard Fjellbo \\
  Department of Mathematics \\
  University of Oslo \\
  Oslo, Norway \\
  \texttt{vegard.fjellbo@gmail.com} \\
   \And
}
\begin{document}
\maketitle

\begin{abstract}
A simplicial set is said to be \textbf{non-singular} if the representing map of each non-degenerate simplex is degreewise injective. The inclusion into the category of simplicial sets, of the full subcategory whose objects are the non-singular simplicial sets, admits a left adjoint functor called desingularization. In this paper, we provide an iterative description of desingularization that is useful for theoretical purposes as well as for doing calculations.

\vspace{1em}
MSC-class: 55U10 (Primary), 18A40 (Secondary)
\end{abstract}

\keywords{Desingularization \and Non-singular Simplicial Sets \and PL Manifolds}

\section{Introduction}
\label{sec:intro_itdesing}

Desingularization is defined thus.
\begin{definition}\label{def:desingularization}
Let $X$ be a simplicial set. The \textbf{desingularization} of $X$ \cite[Rem.~2.2.12]{WJR13}, denoted $DX$, is the image of the map
\[X\to \prod _{f:X\rightarrow Y}Y\]
given by $x\mapsto (f(x))_f$, where the product is indexed over the quotient maps $f:X\rightarrow Y$ whose targets $Y$ are non-singular.
\end{definition}
\noindent A product of non-singular simplicial sets is again non-singular \cite[Rem.~2.2.12]{WJR13} and a simplicial subset of a non-singular simplicial set is again non-singular \cite[Rem.~2.2.12]{WJR13}. Therefore, the simplicial set $DX$ is non-singular. In this paper, we will give a systematic, but minimal introduction to the functor $D$.

If we corestrict the map $X\to \prod _{f:X\rightarrow Y}Y$ to its image $DX$, then we get a map $\eta _X:X\to DX$. By this we simply mean the following. If $h:Z\to W$ is a simplicial map whose image is contained in some simplicial subset $W'$ of $W$, then we say that the induced map $Z\to W'$ is a \textbf{corestriction} of $h$ to $W'$.

Thus far, the description given in \cref{def:desingularization} is the only description available in the literature. In this paper, we provide the viewpoint of \cref{thm:main_result_itdesing} to desingularization. To obtain this viewpoint, we introduce the notion of enforcer in \cref{def:enforcer}.

When we say that a simplex is \textbf{embedded} if its representing map is degreewise injective, we get a more convenient definition of the term \emph{non-singular simplicial set}. Given a simplicial set $X$ and a non-degenerate simplex $x$ in $X$, the enforcer $\rho _x$ is the degeneracy operator that in the least drastic way makes the cobase change of the representing map of $x$ into the representing map of a degenerate simplex, in the case when $x$ is not embedded, or that makes the trivial cobase change, in the case when $x$ is embedded. In other words, the enforcer is the degeneracy operator that is as close as possible to the identity meanwhile honouring any pairwise equalities between the vertices of $x$.

Simultaneously pushing out along all the enforcers associated with a simplicial set $X$ yields a simplicial set $Cen(X)$ that we refer to as the enforced collapse of $X$. The notion is properly introduced in \cref{def:enforced_collapse}. One should think of the enforced collapse as a preferred first step towards making $X$ non-singular. If some non-degenerate simplex of $X$ is not embedded, then we say that $X$ is \textbf{singular}. Note that $Cen(X)$ may be singular. By \cref{lem:pushout_along_enforcers_intermediate_desingularization}, which is formulated in a slightly generalized context compared with the enforced collapse, we get that pushing out along enforcers is never too drastic. Moreover, if the result is non-singular, then it is canonically the desingularization.

We are ready to explain the iterative description of desingularization, which is formulated using the following piece of language.
\begin{definition}\label{def:sequence_composition}
Let $\mathscr{C}$ be some cocomplete category and suppose $\lambda$ some ordinal. A \textbf{$\lambda$-sequence in $\mathscr{C}$} is a cocontinous functor $X:\lambda \to \mathscr{C}$, that we will denote
\begin{displaymath}
\xymatrix{
X^{[0]} \ar[r]^{f^{0,1}} & X^{[1]} \ar[r]^{f^{1,2}} & \cdots \ar[r] & X^{[\beta ]}\ar[r]^{f^{\beta ,\beta +1}} & \cdots \, .
}
\end{displaymath}
The canonical map $X^{[0]}\to colim_{\beta <\lambda }X^{[\beta ]}$ is the \textbf{composition} of the $\lambda$-sequence. A \textbf{sequence} in $\mathscr{C}$ is a $\lambda$-sequence for some $\lambda$.
\end{definition}
\noindent If $\lambda$ is finite, then the composition is a composite in the usual sense.
\begin{theorem}\label{thm:main_result_itdesing}
Let $X$ be a simplicial set. There is an ordinal $\lambda$ such that the map $\eta _X:X\to UDX$ is the composition of the $\lambda$-sequence
\begin{displaymath}
\xymatrix{
Cen^0(X) \ar[r] & Cen^1(X) \ar[r] & \cdots \ar[r] & Cen^\beta (X) \ar[r] & \cdots \, .
}
\end{displaymath}
of iterations of the enforced collapse.
\end{theorem}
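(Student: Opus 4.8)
The plan is to construct the $\lambda$-sequence by transfinite iteration of the enforced collapse functor $Cen$, and then to prove two things: that the sequence stabilizes (so the colimit exists and is reached), and that its composition coincides with $\eta_X : X \to UDX$. First I would set up the iteration itself. At successor stages, define $Cen^{\beta+1}(X) = Cen(Cen^\beta(X))$, with the structure map $Cen^\beta(X) \to Cen^{\beta+1}(X)$ being the canonical map to the enforced collapse (the simultaneous pushout along all enforcers, as in \cref{def:enforced_collapse}). At limit stages $\gamma$, define $Cen^\gamma(X) = \operatorname{colim}_{\beta < \gamma} Cen^\beta(X)$, so that the functor $\beta \mapsto Cen^\beta(X)$ is cocontinuous by construction and hence a genuine $\lambda$-sequence in the sense of \cref{def:sequence_composition}. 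The composition of this sequence is then the canonical map $Cen^0(X) = X \to \operatorname{colim}_{\beta<\lambda} Cen^\beta(X)$.

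Next I would establish termination, \ie that there is an ordinal $\lambda$ at which the sequence becomes eventually constant and the colimit is non-singular. The key observation is that each application of $Cen$ either leaves a non-degenerate simplex embedded or collapses a non-embedded one along its enforcer, and collapsing strictly reduces complexity in a way that cannot continue indefinitely on any fixed simplex: once a simplex is embedded it stays embedded under the structure maps. I would make this precise by a cardinality or well-foundedness argument, bounding the number of non-degenerate simplices that can be affected and using that the structure maps are quotient maps (hence surjective in each degree), so that the number of non-degenerate simplices is non-increasing and the process must stabilize at some ordinal $\lambda$ depending on the size of $X$. At the stabilization point, no non-degenerate simplex admits a nontrivial enforcer, which is exactly the condition that $Cen^\lambda(X)$ is non-singular.

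The main identification then rests on \cref{lem:pushout_along_enforcers_intermediate_desingularization}: pushing out along enforcers is never too drastic, and if the result is non-singular it is canonically the desingularization. The plan is to apply this lemma inductively along the sequence. Each structure map $Cen^\beta(X) \to Cen^{\beta+1}(X)$ is a pushout along enforcers, so by the lemma it is an intermediate step that does not overshoot $DX$; more precisely, the universal map from $Cen^\beta(X)$ to its desingularization factors through $Cen^{\beta+1}(X)$, and $DX$ is preserved along the sequence. Passing to the colimit, and invoking that $Cen^\lambda(X)$ is non-singular together with the ``canonically the desingularization'' clause of the lemma, I would conclude that $\operatorname{colim}_{\beta<\lambda} Cen^\beta(X) \cong UDX$ and that the composition of the sequence is precisely $\eta_X$.

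I expect the main obstacle to be the termination argument and its interaction with the colimit at limit stages. One must ensure that the simultaneous pushouts at successor stages and the colimits at limit stages do not reintroduce singularities in an uncontrolled way, and that the ``complexity measure'' used for termination is genuinely well-founded and respected by the structure maps. Verifying that the lemma applies uniformly along the whole transfinite sequence, rather than merely at finite stages, is the delicate point; in particular I would need to check that cocontinuity of the sequence is compatible with the universal property of desingularization at limit ordinals, so that the factorization through $DX$ survives passage to the colimit.
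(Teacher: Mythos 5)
Your overall architecture (successor stages by applying $Cen$, limit stages by colimits, then using \cref{lem:pushout_along_enforcers_intermediate_desingularization} inductively to keep the factorization of $\eta_X$ and to identify the terminal stage with $UDX$) matches the paper's proof. However, your termination argument contains a genuine gap and rests on a false claim. You assert that ``once a simplex is embedded it stays embedded under the structure maps,'' and that, since the number of non-degenerate simplices is non-increasing, the process must stabilize. Both halves fail. Embeddedness is not preserved by the structure maps: collapsing one non-embedded simplex can identify the vertices of a different, previously embedded simplex, turning it into a non-embedded one. This is precisely the phenomenon exhibited by \cref{ex:infinitely_many_enforced_collapses_needed} (see \cref{fig:ch2_finite_number_of_enforced_collapses_not_enough}): the collapse propagates along the ladder one rung per iteration, so $Cen^k(X)$ is singular for every finite $k$, even though the set of non-degenerate simplices is countably infinite at every stage. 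The same example shows that non-increasing cardinality gives no stabilization for infinite $X$: the cardinal stays constant through infinitely many singular stages, so it neither bounds the length of the iteration nor produces the required ordinal $\lambda$.

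The monotonicity that actually works is the opposite one, and it is what the paper proves as \cref{lem:no_break_iterative_desingularization}: once the image of a non-degenerate simplex becomes degenerate, it remains degenerate under all further quotient maps. Using this, the paper's proof of \cref{prop:iterative_desingularization_halts} assigns to every ordinal $\gamma$ at which $J^\gamma X$ is still singular a lift $x^\gamma \in X^\sharp$ in the \emph{original} $X$ (using that $f^{0,\gamma}$ is degreewise surjective) of a non-embedded non-degenerate simplex of $J^\gamma X$; the lemma forces $x^\beta \neq x^\gamma$ for $\beta < \gamma$, because $f^{0,\gamma}(x^\beta)$ is already degenerate while $f^{0,\gamma}(x^\gamma)$ is not. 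Hence the singular stages inject into $X^\sharp$, and choosing $\lambda$ to be any cardinal strictly greater than the cardinality of $X^\sharp$ guarantees that $J^\lambda X$ is non-singular. You would need to replace your stabilization step with an argument of this kind, tracking degeneracy of images of simplices of $X$ rather than embeddedness downstream; the remainder of your outline is essentially the paper's.
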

\noindent \cref{thm:main_result_itdesing} provides an alternative description of the desingularization functor. Note that the ordinal $\lambda$ depends on the simplicial set $X$.

Let $sSet$ denote the category of simplicial sets. Furthermore, let $nsSet$ denote the category of non-singular simplicial sets. It is by definition the full subcategory of $sSet$ whose objects are the non-singular simplicial sets.

As we explain \cref{def:desingularization} and as we explain how desingularization is functorial in \cref{sec:prelim}, we fix some notation and terminology to be used throughout the paper. Furthermore, we point out the implications for limits and colimits in $nsSet$ of the fact that $D$ is left adjoint to the (full) inclusion $U:nsSet\to sSet$. \cref{sec:prelim} is merely an elaboration of \cite[Rem.~2.2.12]{WJR13}, where desingularization is introduced.

In \cref{sec:calculations}, we introduce the enforcer to serve as the most basic technology for doing calculations as well as for theory. Building on this notion, we provide the two results \cref{prop:role_of_enforcers} and \cref{lem:pushout_along_enforcers_intermediate_desingularization} as tools.

We illustrate how desingularization behaves in \cref{sec:examples}. Our examples include applying $D$ to highly singular, somewhat subdivided and very subdivided simplicial sets, most of which are models of low-dimensional spheres.

Finally, in \cref{sec:description}, we explain how \cref{prop:role_of_enforcers} and \cref{lem:pushout_along_enforcers_intermediate_desingularization} can be used to construct the sequence that \cref{thm:main_result_itdesing} refers to and we conclude the section as well as the paper by deducing \cref{thm:main_result_itdesing} from the construction.

\section{Preliminaries}
\label{sec:prelim}

In this section, we establish the functorality of desingularization. To do this, we first fix some basic notation and terminology, which is anyhow useful throughout this paper. Additionally, we properly explain \cref{def:desingularization} to avoid any confusion.

\subsection{Notation and terminology}

Fritsch and Piccinini \cite{FP90} is a source of the style we use, when it comes to notation and terminology.

The category
\[sSet=Fun(\Delta ^{op},Set)\]
is the category of functors (and natural transformations) with source $\Delta ^{op}$ and target the category $Set$ of sets (and functions). When we write $\Delta$, we mean the skeleton of finite ordinals whose objects are totally ordered sets
\[[n]=\{ 0<1<\dots <n\}\]
and whose morphisms are order-preserving functions $\alpha :[m]\to [n]$, meaning $\alpha (i)\leq \alpha (j)$ if $i\leq j$. An object in the category $sSet$ is a \textbf{simplicial set}.

Morphisms of $\Delta$ are referred to as \textbf{operators}. We sometimes think of a simplicial set $X$ as an $\mathbb{N} _0$-graded set $\bigsqcup _{n\geq 0}X_n$ with operators acting from the right. Here, we mean $X_n=X([n])$, $n\geq 0$. Elements of $X_n$ are referred to as \textbf{$n$-simplices}, $n\geq 0$. We also say that $n$ is the \textbf{degree} of $x$ if $x$ is an $n$-simplex. If $x$ is an $n$-simplex of $X$ and if $\alpha :[m]\to [n]$ is an operator, then $\alpha$ acts on $x$ from the right. The result will be denoted $x\alpha$. The induced function $\alpha ^*:X_n\to X_m$ thus takes $x$ to $\alpha ^*(x)=x\alpha$.

When we think of simplicial sets as graded sets under right action of operators, we also think of a simplicial map $f:X\to Y$, meaning a natural transformation $X\Rightarrow Y$, as a function that respect the degree, meaning $f(x)\in Y_n$ if $x\in X_n$, and that is compatible with the right action of operators, meaning $f(x\alpha )=f(x)\alpha$.

An operator $\alpha :[m]\to [n]$ is referred to as a \textbf{face operator} if $\alpha (i)\neq \alpha (j)$ whenever $i,j\in \{0,\dots ,m\}$ and $i\neq j$. It is referred to as a \textbf{degeneracy operator} if $k=\alpha (j)$ for some $j\in \{0,\dots ,m\}$ for all $k\in \{0,\dots ,n\}$. These classes of morphisms are precisely the monomorphisms and epimorphisms of $\Delta$, respectively.

For each $n>0$ and each $j$ with $0\leq j\leq n$, we can define the face operator $\delta ^n_j:[n-1]\to [n]$ such that $j$ is not in its image, referred to as an \textbf{elementary face operator}. Similarly, for each $n\geq 0$, we can define the degeneracy operator $\sigma ^n_j:[n+1]\to [n]$ with $j\mapsto j$ and $j+1\mapsto j$. Also useful is the \textbf{vertex operator} $\varepsilon ^n_j:[0]\to [n]$ with $0\mapsto j$, defined whenever $0\leq j\leq n$. We often omit the upper index when referring to these three special types of operators.

A degeneracy operator or face operator is \textbf{proper} if it is not an identity morphism. We say that a simplex $y$ is a \textbf{(proper) face} of a simplex $x$ if $y=x\mu$ for some (proper) face operator $\mu$ and that $y$ is a \textbf{(proper) degeneracy} of $x$ if $y=x\rho$ for some (proper) degeneracy operator $\rho$. A simplex is \textbf{non-degenerate} if it is not a proper degeneracy. 

The Eilenberg-Zilber lemma \cite[Thm.~4.2.3]{FP90} says that any simplex $x$ of a simplicial set can be written uniquely as a degeneration of a non-degenerate simplex. This means that there is a unique pair $(x^\sharp ,x^\flat )$ consisting of a non-degenerate simplex $x^\sharp$ and a degeneracy operator $x^\flat$ that satisfies
\[x=x^\sharp x^\flat.\]
The non-degenerate simplex $x^\sharp$ will be referred to as the \textbf{non-degenerate part} of $x$ and $x^\flat$ will be referred to as the \textbf{degenerate part} of $x$. We let $X^\sharp$ denote the set of non-degenerate simplices of a simplicial set $X$ and $X^\sharp _n$ the set of non-degenerate simplices of degree $n$, for each $n\geq 0$.

By the Yoneda lemma, there is a natural bijective correspondence $x\mapsto \bar{x}$ between the set $X_n$ of $n$-simplices and the set of simplicial maps $\Delta [n]\to X$. We say that
\[\bar{x}:\Delta [n]\to X\]
is the \textbf{representing map} of the simplex $x$.

\subsection{Quotients}

Desingularization has the following property \cite[Rem.~2.2.12]{WJR13}.
\begin{lemma}\label{lem:desing_unique_factorization_through_unit}
Let $X$ be a simplicial set. Every simplicial map whose source is $X$ and whose target is non-singular factors uniquely through $\eta _X$.
\end{lemma}
\noindent Before we prove the property, we explain \cref{def:desingularization} properly.

Let $X$ be some simplicial set. Consider the event that we for each $n\geq 0$ have an equivalence relation $R_n$ on $X_n$ such that whenever we have an operator $\alpha :[m]\to [n]$, then the composite
\[R_n\to X_n\times X_n\xrightarrow{\alpha ^*\times \alpha ^*} X_m\times X_m\]
corestricts to $R_m\subseteq X_m\times X_m$. This means that we have a commutative square
\begin{equation}
\label{eq:first_diagram_proof_lem_desing_unique_factorization_through_unit}
\begin{gathered}
\xymatrix{
R_n \ar[d]_{\bar{\alpha } } \ar[r] & X_n\times X_n \ar[d]^{\alpha ^*\times \alpha ^*} \\
R_m \ar[r] & X_m\times X_m
}
\end{gathered}
\end{equation}
which in turn gives rise to a dashed map in the square
\begin{equation}
\label{eq:second_diagram_proof_lem_desing_unique_factorization_through_unit}
\begin{gathered}
\xymatrix{
X_n \ar[d]_{\alpha ^*} \ar[r] & X_n/R_n \ar@{-->}[d] \\
X_m \ar[r] & X_m/R_m
}
\end{gathered}
\end{equation}
such that it commutes.

Thus we obtain a simplicial set $X/R$ given by defining the set
\[(X/R)_n=X_n/R_n\]
as the set of $n$-simplices. It is readily checked that the right hand vertical map in (\ref{eq:second_diagram_proof_lem_desing_unique_factorization_through_unit}) is a right action of the operator $\alpha$ on the set $X_n/R_n$ so that $X/R$ is indeed a simplicial set. From the commutativity of (\ref{eq:second_diagram_proof_lem_desing_unique_factorization_through_unit}), it is automatic that the canonical map $X\to X/R$ is a simplicial map. We say that it is a \textbf{quotient map}. If we fix a simplicial set $X$, then the quotient maps $X\to Y$ form a set. This explains \cref{def:desingularization}.

If $f:X\to Y$ is a degreewise surjective simplicial map, then we may define $R_n$, $n\geq 0$, by letting $x\sim x'$ if $f(x)=f(x')$. Because $f$ respects operators, as a simplicial map, it follows that the equivalence relations $R_n$, $n\geq 0$, form a set of equivalence relations of the type described above. By making a choice of a representative one can define a map $X/R\to Y$ such that the triangle
\begin{equation}
\label{eq:third_diagram_proof_lem_desing_unique_factorization_through_unit}
\begin{gathered}
\xymatrix{
X \ar[dr] \ar[rr]^f && Y \\
& X/R \ar@{-->}[ur]_\cong
}
\end{gathered}
\end{equation}
commutes. The dashed map is an isomorphism by design. This makes \cref{def:desingularization} meaningful in the sense that we can obtain \cref{lem:desing_unique_factorization_through_unit}. 

We are ready to prove the lemma.
\begin{proof}[Proof of \cref{lem:desing_unique_factorization_through_unit}.]
Let $k:X\to A$ be a map whose target $A$ is non-singular. First, note that there is at most one map $\bar{k}$ such that $k=\bar{k} \circ \eta _X$. This is because $\eta _X$ is degreewise surjective and because the degreewise surjective maps are precisely the epimorphisms of $sSet$ \cite[p.~142]{FP90}. It remains to argue that there is a map $\bar{k}$ such that $k=\bar{k} \circ \eta _X$.

Corestrict $k$ to its image $A'$ so that we get a factorization
\begin{displaymath}
 \xymatrix{
 X \ar[d]_{k''} \ar[dr]^{k'} \ar[r]^k & A \\
 X/R \ar[r]_(.35)\cong & A'=\textrm{Im } k \ar[u]_h
 }
\end{displaymath}
of $k$. Then the map $k'$ is a degreewise surjective map whose target is non-singular. We get the diagram
\begin{equation}
\label{eq:fourth_diagram_proof_lem_desing_unique_factorization_through_unit}
\begin{gathered}
\xymatrix{
& \prod _{f:X\rightarrow Y}Y \ar[dr]^{pr_{k''}} \\
X \ar[ur]^{x\mapsto (f(x))_f} \ar[dr]_{\eta _X} \ar[rr]^(.35){k''} & \ar[u] & X/R \\
& DX \ar@{-}[u] \ar[ur]_g
}
\end{gathered}
\end{equation}
in which we have restricted the projection map
\[\textrm{pr} _{k''}:\prod _{f:X\rightarrow Y}Y\to X/R\]
to $DX$ --- a restriction we denote $g$.

From (\ref{eq:fourth_diagram_proof_lem_desing_unique_factorization_through_unit}) we can conclude that $k''=g\circ \eta _X$ as the outer square and the upper triangle commute. Hence, by the design of $DX$, the map $k'$ factors through the restriction $g$ up to identification with a quotient of $X$ that is isomorphic to $A'$. This yields a factorization of $k$ through $\eta _X$ as the composite
\[X\xrightarrow{\eta _X} DX\xrightarrow{g} X/R\xrightarrow{\cong } A'\xrightarrow{h} A\]
is equal to $k$.
\end{proof}
\noindent If we fix a simplicial set $X$, then we can consider degreewise surjective maps $k:X\to A$ whose targets are non-singular. When factored through $\eta _X$, the resulting unique maps $\bar{k} :DX\to A$ are degreewise surjective. In this sense, desingularization is the least drastic way of forming a non-singular quotient from a (possibly singular) simplicial set.

\subsection{Functorality of Desingularization and (co)limits in (the category of) non-singular simplicial sets}

It is possible to define $D$ on morphisms in a straightforward way. Then one realizes that the construction is functorial and that $\eta _X$ is natural as a map $X\to UDX$. If $A$ is non-singular, then $\eta _{UA}$ is an isomorphism. This is observed by factoring the identity $UA\to UA$ through $\eta _{UA}$ by means of \cref{lem:desing_unique_factorization_through_unit}. As $U$ is a full embedding, the latter fact suggests the formulation of \cref{lem:non-singular_reflective_subcategory} below.

A full subcategory of some category is a \textbf{reflective subcategory} if the inclusion admits a left adjoint. The terminology is not quite standard as the fullness assumption is omitted by some, for example in \cite[§IV.3]{ML98} and \cite[p.~1306]{AR15}. As announced, we have the following result \cite[Rem.~2.2.12]{WJR13}.
\begin{lemma}\label{lem:non-singular_reflective_subcategory}
The category of non-singular simplicial sets is a reflective subcategory of $sSet$.
\end{lemma}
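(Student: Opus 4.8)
The plan is to verify the two defining conditions of a reflective subcategory: that the inclusion $U:nsSet\to sSet$ admits a left adjoint, and that $nsSet$ is a full subcategory. Fullness is immediate, since $nsSet$ was defined as the full subcategory of $sSet$ whose objects are the non-singular simplicial sets. So the entire content of the lemma is the existence of the left adjoint, and I would argue that desingularization $D$ is that left adjoint. Concretely, I would establish the adjunction $D\dashv U$ by exhibiting the unit $\eta$ and checking the universal property that characterizes it.

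The key step is to promote \cref{lem:desing_unique_factorization_through_unit} to a genuine adjunction. First I would recall that for each simplicial set $X$ we have a map $\eta_X:X\to UDX$, and that by \cref{lem:desing_unique_factorization_through_unit} every map $k:X\to UA$ with $A$ non-singular factors uniquely as $k=U\bar k\circ\eta_X$ for some $\bar k:DX\to A$. This is precisely the statement that $\eta_X$ is a universal arrow from $X$ to the functor $U$, in the sense of \cite[§IV.1]{ML98}. I would then invoke the standard theorem that a functor $U$ admits a left adjoint if and only if every object $X$ in the source category has such a universal arrow to $U$; assembling these objectwise universal arrows yields the left adjoint $D$, the unit $\eta$, and the naturality data automatically. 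Since $\bar k=U\bar k$ under the full embedding $U$, the uniqueness clause of \cref{lem:desing_unique_factorization_through_unit} supplies exactly the bijection
\[
nsSet(DX,A)\;\xrightarrow{\ \cong\ }\;sSet(X,UA),\qquad \bar k\mapsto U\bar k\circ\eta_X,
\]
natural in $X$ and $A$, which is the definition of the adjunction.

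I expect the main obstacle to be bookkeeping rather than any deep difficulty: one must confirm that the factorizations of \cref{lem:desing_unique_factorization_through_unit} are natural in $X$, so that $D$ is well-defined on morphisms and $\eta$ is a natural transformation, rather than merely a collection of maps. The paper has already indicated, in the discussion preceding \cref{lem:non-singular_reflective_subcategory}, that $D$ can be defined on morphisms in a straightforward way and that $\eta_X$ is natural, so I would lean on that and on the uniqueness half of \cref{lem:desing_unique_factorization_through_unit} to pin down naturality: given $g:X\to X'$, the two maps $DX\to DX'$ obtained from $\eta_{X'}\circ g$ agree because both factor the same map through the universal $\eta_X$, and universality forces uniqueness. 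With naturality in hand, the adjunction and hence the reflectivity follow immediately, completing the proof.
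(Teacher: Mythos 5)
Your proof is correct, but it takes a genuinely different route from the paper's. You establish the adjunction via the universal-arrow characterization: \cref{lem:desing_unique_factorization_through_unit}, combined with the fullness and faithfulness of $U$, says exactly that $\eta_X:X\to UDX$ is a universal arrow from $X$ to $U$, and the standard theorem of \cite[§IV.1]{ML98} then assembles these pointwise universal arrows into a left adjoint $D$, with functoriality of $D$ and naturality of $\eta$ falling out automatically from uniqueness of factorizations. The paper instead takes functoriality of $D$, naturality of $\eta$, and the fact that $\eta_{UA}$ is an isomorphism for non-singular $A$ as inputs (justified in the discussion preceding the lemma), and then exhibits the adjunction in unit--counit form: it defines the counit $\epsilon_A$ as the morphism with $U(\epsilon_A)=\eta_{UA}^{-1}$, checks its naturality, and verifies the two triangle identities directly, using the identity $\eta_{UDX}=UD(\eta_X)$ for the second one. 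Your approach is leaner, since the only substantive input is \cref{lem:desing_unique_factorization_through_unit} and all naturality bookkeeping is delegated to the general theorem (indeed, your final paragraph's worry about naturality is already resolved by that theorem, so leaning on the paper's prior discussion is not even necessary). The paper's approach buys explicit adjunction data: it identifies the counit concretely as the inverse of the unit on non-singular objects, a fact the paper exploits afterwards when discussing limits and colimits in $nsSet$. Both proofs ultimately rest on the same lemma, so neither is more general, but yours is the more economical derivation while the paper's is the more informative construction.
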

\begin{proof}
We will prove the lemma by establishing the natural map $\eta _X$ as the unit of a pair $(\eta ,\epsilon )$ consisting of a unit and a counit $\epsilon$.

Let $f:A\to B$ be a morphism in $nsSet$. Consider the diagram
\begin{displaymath}
 \xymatrix{
 & UD(UB) \\
 && UD(UA) \ar[lu]_{UD(Uf)} \ar[ddr]^{U(\epsilon _A)=\eta ^{-1}_{UA}} \\
 UB \ar[uur]_\cong ^{\eta _{UB}} \\
 & UA \ar[lu]^{Uf} \ar[uur]_\cong ^{\eta _{UA} } \ar[rr]_{id} && UA
 }
\end{displaymath}
in which the inverse $\eta ^{-1}_{UA}$ appears. As $U$ is full, the latter is equal to $U(\epsilon _A)$ for some morphism $\epsilon _A:DU\, A\to A$ of $nsSet$. It is evident from the outer part of the diagram that $\epsilon _A$ is natural in $A$.

The triangle at the right hand side, which defines $\epsilon _A$, is the first half of the compatibility criterion that a unit and a counit must satisfy. The commutative square
\begin{displaymath}
\xymatrix{
 X \ar[d]_{\eta _X} \ar[rr]_{\eta _X} && UDX \ar[d]^{UD(\eta _X)} \\
 UDX \ar[rr]_{\eta _{UDX}}^\cong && UD(UDX)
}
 \end{displaymath}
shows that
\[\eta _{UDX}=UD(\eta _X)\]
for every simplicial set $X$. If we combine this with the definition of $\epsilon _{DX}$, then we get the commutative triangle-shaped diagram
\begin{displaymath}
\xymatrix{
& D(UDX)=DU(DX) \ar[dr]^(.6){\epsilon _{DX}} \\
DX \ar[ur]^(.4){D(\eta _X)} \ar[rr]_{id} && DX
}
\end{displaymath}
which is the second half of the compatibility criterion. This concludes the verification that $D$ is left adjoint to the inclusion $U$.
\end{proof}
\noindent The implication of \cref{lem:non-singular_reflective_subcategory} is that it has a strong bearing on the formation of (co)limits of diagrams in $nsSet$, as we now explain.

A diagram in a reflective subcategory has a limit if it has a limit when considered a diagram in the surrounding category. In that case, the limit is inherited by the subcategory. See for example \cite[p.~92]{ML98} or \cite[p.~1306]{AR15}. Consequently, $nsSet$ is complete as $sSet$ is.

The colimit in a reflective subcategory can be formed by taking it in the surrounding category, if it exists there, and then applying the reflector. As the counit of an adjunction is an isomorphism whenever the right adjoint is fully faithful \cite[§IV.3~Thm.1]{ML98}, we obtain a colimit of the original diagram. The reflector is in our case desingularization. Thus $nsSet$ is cocomplete because $sSet$ is cocomplete, although this way of computing a colimit in $nsSet$ requires knowledge of desingularization.

For later reference we record the following consequence of \cref{lem:non-singular_reflective_subcategory}.
\begin{corollary}\label{cor:nsSet_bicomplete}
The category $nsSet$ of non-singular simplicial sets is bicomplete.
\end{corollary}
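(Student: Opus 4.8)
The plan is to derive \cref{cor:nsSet_bicomplete} as an immediate consequence of \cref{lem:non-singular_reflective_subcategory} together with the discussion of (co)limits that precedes it. Recall that a category is \emph{bicomplete} if it is both complete and cocomplete. By \cref{lem:non-singular_reflective_subcategory}, the inclusion $U:nsSet\to sSet$ admits the left adjoint $D$, so $nsSet$ is a reflective subcategory of $sSet$. The argument then splits into two halves, one for limits and one for colimits, each invoking a standard fact about reflective subcategories.

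First I would establish completeness. The category $sSet=Fun(\Delta ^{op},Set)$ is complete because $Set$ is complete and limits of functor categories are computed pointwise. As noted in the paragraph following \cref{lem:non-singular_reflective_subcategory}, any diagram in a reflective subcategory that has a limit in the ambient category has that same limit in the subcategory; the right adjoint $U$ preserves limits, and since it is a full embedding the limit cone lands in $nsSet$. Hence every small diagram in $nsSet$ has a limit, so $nsSet$ is complete.

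Next I would establish cocompleteness. Here $sSet$ is cocomplete for the same pointwise reason. For a small diagram in $nsSet$, form its colimit in $sSet$ and apply the reflector $D$. Because the right adjoint $U$ is fully faithful, the counit $\epsilon$ of the adjunction $(D,U)$ is an isomorphism, so the image under $D$ of the ambient colimit is a colimit of the original diagram back in $nsSet$. This is precisely the recipe described before the corollary. Hence $nsSet$ is cocomplete.

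The main point to get right is not a deep obstacle but rather bookkeeping: one must be careful to distinguish limits, which are inherited directly from $sSet$ and require only that $U$ preserve them, from colimits, which must be corrected by applying $D$ and rely on the fully faithfulness of $U$ to guarantee via the counit isomorphism that the corrected object genuinely is the colimit. Once both halves are in place, $nsSet$ is complete and cocomplete, hence bicomplete, which is the assertion of \cref{cor:nsSet_bicomplete}.
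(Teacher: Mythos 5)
Your proof is correct and follows essentially the same route as the paper: the paper records \cref{cor:nsSet_bicomplete} precisely as a consequence of \cref{lem:non-singular_reflective_subcategory} together with the two standard facts about reflective subcategories that you invoke \dash limits are inherited directly from $sSet$, while colimits are formed in $sSet$ and then corrected by the reflector $D$, using that the counit is an isomorphism since $U$ is fully faithful. Nothing is missing; your bookkeeping of the asymmetry between the two halves matches the paper's discussion exactly.
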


\section{Calculational methods}
\label{sec:calculations}

As far as we know, the only explicit description of $DX$ that is present in the literature is that of \cref{def:desingularization}. It has the advantage that we easily obtain \cref{lem:desing_unique_factorization_through_unit}. However, the description is otherwise rather difficult to work with. Consequently, we would like some tools to aid in calculation. In this section, we will make a couple of observations that are actually enough to perform a few simple, yet interesting desingularizations.

It is maybe in order that the following near-trivial example be mentioned first.
\begin{example}\label{ex:simplest_non-trivial_desingularization}
Consider a simplicial set $X$ whose set $X_0$ of $0$-simplices is a singleton. It follows immediately from the definition of the term non-singular that any simplex of $A=DX$ is degenerate if its degree is $1$ or higher. If $a$ is a simplex of $A$, then we can write it uniquely as a degeneration
\[a=a^\sharp a^\flat\]
of a non-degenerate simplex $a^\sharp$, by the Eilenberg-Zilber lemma. As we have just argued, the only non-degenerate simplex is the single $0$-simplex, so $a^\sharp$ is that one. If $a$ and $b$ have the same dimension, $n$ say, then
\[a^\flat =a^\flat\]
as there is only one operator $[n]\to [0]$. This proves that the set $A_n$ of $n$-simplices is a singleton, implying that the unique map
\[DX\xrightarrow{\cong } \Delta [0]\]
is an isomorphism.
\end{example}
\noindent Arguably, \cref{ex:simplest_non-trivial_desingularization} is the simplest non-trivial example.

Let $X$ be a simplicial set. Towards the goal of making it non-singular we would need to force any non-embedded non-degenerate simplex into becoming degenerate. Suppose $x\in X^\sharp _{n_x}$. The simplex $x$ is embedded if and only if its vertices are pairwise distinct. If it is not embedded, then we would like to make it degenerate according to any pairwise equalities between its vertices. To achieve this we begin by defining a reflexive, symmetric binary relation $\sim$ on
\[O([n_x])=\{ 0,\dots ,n_x\}\]
by letting
\[i\sim j\Leftrightarrow x\varepsilon _i=x\varepsilon _j.\]
Next, we can define a reflexive binary relation $\approx$ on $O([n_x])$ by letting $i\approx k$ if and only if there is a $j$ such that $i\leq k\leq j$ in the total order on $[n_x]$ and such that $i\sim j$. If $i\sim j$ and $i\leq j$, then $i\approx j$. This means that $\sim$ is contained in the equivalence relation $\simeq$ on $O([n_x])$ that is generated by $\approx$.

Crucially, the equivalence relation $\simeq$ has the property described in the following result.
\begin{lemma}\label{lem:equivalence_relation_gives_rise_to_enforcer}
The equivalence relation $\simeq$ on $O([n_x])$ that is generated by $\approx$ has the property that if $i\simeq j$ and if $i\leq k\leq j$ in the total order on $[n_x]$, then $i\simeq k$.
\end{lemma}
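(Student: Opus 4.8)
The plan is to show that the $\simeq$-classes are precisely the unions of those $\sim$-classes that are linked through overlapping convex hulls, and then to observe that such a union is automatically convex in the total order. I begin by noting that $\sim$ is itself an equivalence relation, since $i\sim j$ is literally the equality $x\varepsilon_i=x\varepsilon_j$. Let $\{C_\ell\}$ be its classes and write $H_\ell=[\min C_\ell,\max C_\ell]$ for the convex hull of $C_\ell$ in $O([n_x])$. First I would reformulate $\approx$ in these terms: since the $j$ with $i\sim j$ are exactly the members of the $\sim$-class of $i$, one checks that $i\approx k$ holds if and only if $i\leq k\leq \max C$, where $C$ is the $\sim$-class of $i$; in particular any $\approx$-step, read in either direction, relates two elements that both lie in the single convex hull $H$ attached to the $\sim$-class of its smaller argument.

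Next I would introduce an auxiliary equivalence relation $\equiv$ that declares $i\equiv j$ whenever the $\sim$-class of $i$ and the $\sim$-class of $j$ can be joined by a finite sequence of $\sim$-classes in which consecutive convex hulls intersect. This is visibly an equivalence relation, being the ``same connected component'' relation for the graph whose vertices are the $\sim$-classes and whose edges record intersecting hulls. The identification $\equiv=\simeq$ has two halves. For $\simeq\,\subseteq\,\equiv$ it suffices to check $\approx\,\subseteq\,\equiv$, which follows from the reformulation above, because an $\approx$-step keeps both elements inside one hull and hence inside one component; since $\equiv$ is an equivalence relation containing $\approx$ and $\simeq$ is the smallest such, $\simeq\,\subseteq\,\equiv$. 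For $\equiv\,\subseteq\,\simeq$ I would show that two classes $C,D$ with intersecting hulls are already $\simeq$-equal: if $r\in H_C\cap H_D$, then $\min C\approx r$ and $\min D\approx r$ by the reformulation, so $\min C\simeq r\simeq\min D$, and since each class is $\sim$-connected it is $\simeq$-connected; transitivity along a linking sequence then fuses a whole component into one $\simeq$-class.

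It remains to prove that each $\equiv$-class is convex, which is the crux of the argument. I would first observe that the $\equiv$-class of $i$ coincides, as a set of elements, with the union $\bigcup H_\ell$ taken over the classes $C_\ell$ in the component of $i$: the classes themselves are clearly included, and conversely any $k\in H_\ell$ lies in its own hull as well, so that hull meets $H_\ell$ and the class of $k$ joins the component. Convexity of this union is then a separation argument: suppose $a\leq k\leq b$ with $a,b$ in the union but $k$ in no hull of the component. Because each $H_\ell$ is an interval not containing $k$, every hull in the component lies entirely below $k$ or entirely above $k$; a below-hull and an above-hull are disjoint, so no edge of the component graph can cross from the below-part to the above-part. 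As the below-part contains the hull of $a$ and the above-part that of $b$, both nonempty, this disconnects the component, a contradiction. Hence $k$ lies in some hull of the component, so $k\equiv i$, that is $k\simeq i$, which is exactly the claimed interval property.

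The main obstacle I anticipate is the bookkeeping around the asymmetry of $\approx$ (only the first argument is forced to be the smaller one) when translating $\approx$-chains into hull-intersection data, together with making the final separation argument rigorous in the discrete order. The reformulation of $\approx$ in terms of $\max C$ is what tames the asymmetry, after which the elementary ``a connected union of intervals is an interval'' lemma does the real work.
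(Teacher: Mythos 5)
Your proof is correct, but it takes a genuinely different route from the paper's. The paper argues directly on elements: given $i\simeq j$ with $i\leq k\leq j$, it takes a finite chain of $\approx$-steps from $i$ to $j$ and observes that some step must cross $k$; since $a\approx b$ forces $a\leq b$, the crossing step has the form $i_p\approx i_{p+1}$ with $i_p\leq k\leq i_{p+1}$ (or its mirror image), and the witness $j'$ in the definition of $\approx$ then yields $i_p\approx k$ directly, whence $i\simeq k$. You instead characterize $\simeq$ outright: it is the connected-component relation of the graph whose vertices are the $\sim$-classes and whose edges record intersecting convex hulls, each $\simeq$-class is the union of the hulls in its component, and such a union is an interval by your separation argument. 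This is heavier machinery for the lemma itself --- your separation step is essentially the paper's crossing step transported from chains of elements to paths in the class graph --- but it buys more: an explicit description of the $\simeq$-classes, and hence of the enforcer $\rho _x$, which the paper's proof does not provide. One difference in generality is worth noting: your construction of $\sim$-classes uses that $\sim$ is transitive, which holds here because $\sim$ is the kernel of $i\mapsto x\varepsilon _i$, whereas the paper's argument uses only the reflexivity and symmetry it records for $\sim$; so the paper's proof applies verbatim to any reflexive, symmetric relation, while yours as written does not.
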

\begin{proof}
Assume that $i\simeq j$ and that $i\leq k\leq j$ in the total order on $[n_x]$. Consider the non-trivial case $i<j$.

In the special case when $i\approx j$, there is a $j'$ such that $i\leq j\leq j'$ and $i\sim j'$. As $j\leq j'$ and $i\leq k\leq j$ we get that $i\leq k\leq j'$. Because $i\sim j'$ we then get that $i\approx k$ from the definition of this binary relation, which implies $i\simeq k$.

If it is not true that $i\approx j$, then we still have elements
\[i_0,\dots ,i_q\in O([n_x])\]
for some $q>1$ such that
\begin{displaymath}
\begin{array}{rcl}
i & = & i_0 \\
i_q & = & j
\end{array}
\end{displaymath}
and
\begin{displaymath}
\begin{array}{rclcrcl}
i_0 & \approx & i_1 & \textrm{ or } & i_1 & \approx & i_0 \\
&&& \dots \\
i_{q-1} & \approx & i_q & \textrm{ or } & i_q & \approx & i_{q-1}
\end{array}
\end{displaymath}
There is some $p<q$ such that $i_p\leq k\leq i_{p+1}$, in the case when $i_p\approx i_{p+1}$, or that $i_p\geq k\geq i_{p+1}$, in the case when $i_{p+1}\approx i_p$. Thus $i\simeq k$.
\end{proof}
\noindent An immediate consequence of \cref{lem:equivalence_relation_gives_rise_to_enforcer} is that the set $O([n_x])/\simeq$ has a canonical total order $\leq$ that the canonical function
\[O([n_x])\to O([n_x])/\simeq\]
respects.

If $m_x+1$ is the cardinality of the set $O([n_x])/\simeq$, then the canonical identification
\[(O([n_x])/\simeq ,\leq )\xrightarrow{\cong } [m_x]\]
suggested above gives rise to the method of enforcing the rules of glueing in $nsSet$.
\begin{definition}\label{def:enforcer}
Let $x$ be a non-degenerate simplex of some simplicial set. Define $\rho _x$ as the composite
\[[n_x]\to (O([n_x])/\simeq ,\leq )\xrightarrow{\cong } [m_x].\]
Let the degeneracy operator $\rho _x$ be known as the \textbf{enforcer of $x$}.
\end{definition}
\noindent In general, the degeneracy operators whose source is $[n_x]$ correspond to equivalence relations on the set $O([n_x])$ that satisfy precisely the condition from \cref{lem:equivalence_relation_gives_rise_to_enforcer}.

The name of $\rho _x$ is meant to signify that it has a role in making sure that the result of desingularizing $X$ is a simplicial set that obeys the rules of glueing in the category $nsSet$. These are stricter than the rules in the category $sSet$. By construction, the enforcer deals with any equalities between vertices of $x$, but in the least drastic manner. It is proper if and only if $x$ is not embedded.
\begin{proposition}\label{prop:role_of_enforcers}
Let $J\subseteq X^\sharp$ be some set of non-degenerate simplices. There is a canonical map
\[\bigsqcup _{j\in J}\Delta [m_j]\to UDX\]
such that the square
\begin{equation}
\label{eq:first_diagram_proof_prop_role_of_enforcers}
\begin{gathered}
\xymatrix{
\bigsqcup _{j\in J}\Delta [n_j] \ar[rr]^{\sqcup _{j\in J}(\rho _j)} \ar[d]_{\vee _{j\in J}(\bar{\jmath } )} && \bigsqcup _{j\in J}\Delta [m_j] \ar[d] \\ 
X \ar[rr]_{\eta _X} && UDX
}
\end{gathered}
\end{equation}
commutes.
\end{proposition}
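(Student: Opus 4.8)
The plan is to build the right-hand vertical map as a coproduct indexed by $J$ and to check commutativity on each summand separately. Concretely, for every $j\in J$ I would produce a map $g_j\colon \Delta[m_j]\to UDX$ satisfying $g_j\circ \rho_j=\eta_X\circ \bar{\jmath}$, where $\rho_j$ also denotes the map $\Delta[n_j]\to \Delta[m_j]$ induced by the enforcer; the canonical map is then $\vee_{j\in J}g_j$, and the square in (\ref{eq:first_diagram_proof_prop_role_of_enforcers}) commutes because its restriction to each summand $\Delta[n_j]$ reads $g_j\circ \rho_j=\eta_X\circ\bar{\jmath}$. Since the universal property of the coproduct reduces everything to the summands, the whole statement follows once this per-summand factorization is in place.

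The heart of the matter is the claim that the simplex $\eta_X(j)$ of $UDX$ is a $\rho_j$-degeneracy, that is, $\eta_X(j)=z_j\,\rho_j$ for a unique simplex $z_j\in (UDX)_{m_j}$; one then sets $g_j=\bar{z}_j$. To prove this I would exploit that $UDX$ is non-singular. Writing $y=\eta_X(j)$ and decomposing $y=y^\sharp y^\flat$ by the Eilenberg--Zilber lemma, the non-degenerate part $y^\sharp$ is embedded, so its vertices are pairwise distinct. From $y=y^\sharp y^\flat$ one computes $y\varepsilon_i=y^\sharp\varepsilon_{y^\flat(i)}$, whence $y\varepsilon_i=y\varepsilon_{i'}$ holds precisely when $y^\flat(i)=y^\flat(i')$. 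In other words, the equivalence relation $E$ on $O([n_j])$ recording coincidences among the vertices of $y$ is exactly the kernel relation of the degeneracy operator $y^\flat$, and therefore satisfies the convexity condition of \cref{lem:equivalence_relation_gives_rise_to_enforcer}.

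It remains to compare $E$ with the relation $\simeq$ defining $\rho_j$. Since $\eta_X$ is a simplicial map it respects vertices, so $j\varepsilon_i=j\varepsilon_{i'}$ forces $y\varepsilon_i=y\varepsilon_{i'}$; that is, $E$ contains the relation $\sim$ attached to $j$. As $E$ is a convex-closed equivalence relation containing $\sim$, and $\simeq$ is by construction the smallest such relation (any convex equivalence relation containing $\sim$ must contain $\approx$, hence the equivalence relation $\simeq$ it generates), we obtain $\simeq\,\subseteq E$. Passing to associated degeneracy operators, this inclusion yields a factorization $y^\flat=\tau\,\rho_j$ through the finer quotient $\rho_j$, with $\tau$ order-preserving because both quotients are totally ordered. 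Hence $y=y^\sharp\tau\,\rho_j$ and $z_j:=y^\sharp\tau$ does the job. Uniqueness of $z_j$, and thus canonicity of $g_j$ and of the resulting map $\bigsqcup_{j\in J}\Delta[m_j]\to UDX$, follows because $\rho_j$ induces a degreewise surjective, hence epic, map of representables, together with the uniqueness in the Eilenberg--Zilber decomposition.

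I expect the main obstacle to lie in the combinatorial bookkeeping of the third paragraph: identifying $\simeq$ as the minimal convex-closed equivalence relation containing $\sim$, and tracking the two degeneracy operators $\rho_j$ and $y^\flat$ through their kernel relations so as to obtain the factorization $y^\flat=\tau\,\rho_j$ in the correct direction. The conceptual input---that non-singularity of $UDX$ forces the degenerate part of $\eta_X(j)$ to record exactly the vertex coincidences---is short; the care lies in matching it against the interval-filling definition of $\simeq$.
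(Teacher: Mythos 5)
Your proposal is correct and follows essentially the same route as the paper's proof: decompose $\eta_X(j)$ via the Eilenberg--Zilber lemma, use non-singularity of $UDX$ to see that the non-degenerate part is embedded, deduce that the kernel relation of the degenerate part contains $\simeq$, and obtain the factorization through $\rho_j$ summand by summand. The only cosmetic difference is that you package the containment $\simeq\,\subseteq E$ via minimality of $\simeq$ among convex equivalence relations containing $\sim$, where the paper checks the containment directly on the generating relation $\approx$.
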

\begin{proof}
First, note that if $x\in X^\sharp _n$, then the composite
\[\Delta [n_x]\xrightarrow{\bar{x} } X\xrightarrow{\eta _X} UDX\]
factors through $N\rho _x$. One realizes this by considering the image $z$ of $x$ under $\eta _X$. It is uniquely a
degeneracy of a non-degenerate simplex. We get the diagram
\begin{displaymath}
 \xymatrix{
 \Delta [m_x] \ar@{-->}[dr] \\
 & \Delta [m] \ar[dr]^{\overline{z^\sharp } } \\
 \Delta [n_x] \ar[uu]^{\rho _x} \ar[ur]^{z^\flat } \ar[dr]_{\bar{x} } \ar[rr]^{\bar{z} } && UDX \\
 & X \ar[ur]_{\eta _X}
 }
\end{displaymath}
in which $Nz^\flat$ factors uniquely through $N\rho _x$. The explanation for the latter factorization is as follows.

That there is at most one factorization comes from the fact that the nerve $N$ is fully faithful and that $\rho _x$ is epic in $Cat$. That there is a factorization follows from the observation that $z^\flat (i)=z^\flat (i')$ whenever $\rho _x(i)=\rho _x(i')$, as we now argue.

First, suppose $i\sim i'$, meaning $x\varepsilon _i=x\varepsilon _{i'}$. As $\eta _X$ is a simplicial map it follows that
\[z^\sharp \varepsilon _{z^\flat (i)}=z^\sharp z^\flat \varepsilon _i=z\varepsilon _i=z\varepsilon _{i'}=z^\sharp z^\flat \varepsilon _{i'}=z^\sharp \varepsilon _{z^\flat (i')}.\]
The simplicial set $UDX$ is non-singular, so $z^\sharp$ is embedded. Hence,
\[z^\flat (i)=z^\flat (i').\]
Next, as $z^\flat$ is order-preserving we have that $z^\flat (i)=z^\flat (k)$ for each $k$ with $i\leq k\leq i'$. As a consequence, $z^\flat (i)=z^\flat (k)$ whenever $i\approx k$.

In turn, we get that the the equivalence relation $\simeq$ on $O([n_x])$, which corresponds to $\rho _x$, is contained in the equivalence relation that corresponds to $z^\flat$. Thus we obtain a canonical degeneration $w_x$ of $z^\sharp$ such that the square
\begin{displaymath}
\xymatrix{
\Delta [n_x] \ar[d]_{\bar{x} } \ar[r]^{\rho _x} & \Delta [m_x] \ar[d]^{\bar{w} _x} \\
X \ar[r]_{\eta _X} & UDX
}
\end{displaymath}
commutes.

The composites
\[\Delta [n_j]\xrightarrow{\bar{\jmath } } X\xrightarrow{\eta _X} DX,\]
$j\in J$, give rise to a canonical map $\sqcup _{j\in J}\Delta [n_j]\to DX$. The latter can be factored in two different ways due to (\ref{eq:first_diagram_proof_lem_pushout_along_enforcers_intermediate_desingularization}).

The diagram illustrated by
\begin{displaymath}
\xymatrix{
\bigsqcup _{j\in J}\Delta [n_j] \ar@{-->}[r] & \bigsqcup _{j\in J}\Delta [m_j] \ar@{-->}[r] & UDX \\
\Delta [n_x] \ar[u] \ar[r]_{\rho _x} & \Delta [m_x] \ar[u] \ar[ur]_{\bar{w} _x}
}
\end{displaymath}
provides the first of the factorizations that we have in mind and the diagram
\begin{displaymath}
\xymatrix{
\Delta [n_x] \ar[d] \ar[dr]^{\bar{x} } \\
\bigsqcup _{j\in J}\Delta [n_j] \ar@{-->}[r] & X \ar[r]^{\eta _X} & DX
}
\end{displaymath}
provides the second. The promised commutative square consists of precisely these two factorizations.
\end{proof}
\begin{lemma}\label{lem:pushout_along_enforcers_intermediate_desingularization}
Let $X$ be a simplicial set and let $J\subseteq X^\sharp$ be some set of non-degenerate simplices. Consider the cocartesian square
\begin{displaymath}
\xymatrix{
\bigsqcup _{j\in J}\Delta [n_{j}] \ar[rr]^{\sqcup _{j\in J}(\rho _j)} \ar[d]_{\vee _{j\in J}(\bar{\jmath } )} && \bigsqcup _{j\in J}\Delta [m_j] \ar[d] \\
X \ar[rr] && Y
}
\end{displaymath}
in $sSet$. The unit $\eta _X$ factors through the canonical degreewise surjective map $X\to Y$. If $Y$ is non-singular, then the map $Y\to UDX$ of the factorization is an isomorphism.
\end{lemma}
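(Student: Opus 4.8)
The plan is to build the factorization directly from the universal property of the pushout together with \cref{prop:role_of_enforcers}, and then to recognize the resulting map as an isomorphism by comparing it against the universal map supplied by \cref{lem:desing_unique_factorization_through_unit}.

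First I would note that \cref{prop:role_of_enforcers} provides a commutative square with the very same top edge $\sqcup _{j\in J}(\rho _j)$ and left edge $\vee _{j\in J}(\bar{\jmath } )$ as the given cocartesian square, but with $UDX$ in the lower-right corner and $\eta _X$ along the bottom. Feeding this square into the universal property of the pushout $Y$ yields a unique map $g:Y\to UDX$ whose composite with the canonical map $p:X\to Y$ equals $\eta _X$; this is precisely the asserted factorization of the unit. To justify that $p$ is degreewise surjective, I would argue that each enforcer $\rho _j$, being a degeneracy operator, is split epic in $\Delta$, so its representable map $\Delta [n_j]\to \Delta [m_j]$ is degreewise surjective; hence $\sqcup _{j\in J}(\rho _j)$ is degreewise surjective as well, and since pushouts preserve epimorphisms and the epimorphisms of $sSet$ are exactly the degreewise surjections, so is $p$.

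For the second claim, assume that $Y$ is non-singular. Then $p:X\to Y$ is a degreewise surjective map with non-singular target, so \cref{lem:desing_unique_factorization_through_unit} yields a unique $\bar{p}:UDX\to Y$ with $\bar{p}\circ \eta _X=p$. I would then verify that $g$ and $\bar{p}$ are mutually inverse by a pair of epimorphism-cancellations: from $(g\circ \bar{p})\circ \eta _X=g\circ p=\eta _X$ together with the fact that $\eta _X$ is epic one obtains $g\circ \bar{p}=\mathrm{id}_{UDX}$, while from $(\bar{p}\circ g)\circ p=\bar{p}\circ \eta _X=p$ together with the fact that $p$ is epic one obtains $\bar{p}\circ g=\mathrm{id}_Y$. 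Hence $g$ is an isomorphism, which is the conclusion.

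Since the substantive work is already carried by \cref{prop:role_of_enforcers}, the argument is largely formal. The one step I expect to require genuine care is the degreewise surjectivity of $p$: it rests on knowing both that representable maps of degeneracy operators are degreewise surjective and that degreewise surjections in $sSet$ are stable under coproducts and pushouts, after which the whole statement reduces to cancellation of epimorphisms.
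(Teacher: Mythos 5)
Your proposal is correct and takes essentially the same route as the paper: both obtain $g:Y\to UDX$ by feeding the commutative square of \cref{prop:role_of_enforcers} into the pushout's universal property, note that $X\to Y$ is degreewise surjective as a cobase change of (a coproduct of) representable maps of degeneracy operators, and invoke \cref{lem:desing_unique_factorization_through_unit} to produce the map $UDX\to Y$ when $Y$ is non-singular. The only difference is in the final step, where you conclude with a second epimorphism-cancellation (using that $X\to Y$ is epic) to get mutually inverse maps, while the paper instead checks degreewise injectivity and surjectivity of $UDX\to Y$; both finishes are valid, and yours is slightly more formal.
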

\begin{proof}
As a result of \cref{prop:role_of_enforcers}, the solid diagram
\begin{displaymath}
\xymatrix{
\bigsqcup _{j\in J}\Delta [n_j] \ar[rr]^{\sqcup _{j\in J}(\rho _j)} \ar[d]_{\vee _{j\in J}(\bar{\jmath } )} && \bigsqcup _{j\in J}\Delta [m_j] \ar[d] \ar@/^1pc/[ddr] \\ 
X \ar@/_1pc/[drrr]_{\eta _X} \ar[rr] && Y \ar@{-->}[dr] \\
&&& UDX
}
\end{displaymath}
commutes. Thus a canonical map $Y\to UDX$ arises. It is degreewise surjective as $\eta _X$ is degreewise surjective.

Suppose $Y$ non-singular. We will argue that $Y\to UDX$ is even degreewise injective in this case and that it is thus an isomorphism. We get the commutative diagram
\begin{equation}
\label{eq:first_diagram_proof_lem_pushout_along_enforcers_intermediate_desingularization}
\begin{gathered}
\xymatrix{
& UDX \\
X \ar[dr]_{\eta _X} \ar[ur]^{\eta _X} \ar[rr] && Y \ar[lu] \\
& UDX \ar@{-->}[ur]
}
\end{gathered}
\end{equation}
in which the upper triangle comes from the pushout above and in which the lower triangle comes from $Y$ being non-singular. Hence, the composite
\[UDX\to Y\to UDX\]
is the identity as $\eta _X$ is epic in $sSet$. This implies that $UDX\to Y$ is degreewise injective.

The canonical map $X\to Y$ that comes with the pushout $Y$ is degreewise surjective as it is a cobase change of a degreewise surjective map. Consequently, we can conclude from (\ref{eq:first_diagram_proof_lem_pushout_along_enforcers_intermediate_desingularization}) that the map $UDX\to Y$ is degreewise surjective. This implies that $Y\to UDX$ is degreewise injective in this case.
\end{proof}
\noindent \cref{lem:pushout_along_enforcers_intermediate_desingularization} confirms the intuition that taking the pushout along enforcers is never too drastic.

\section{A few calculations}
\label{sec:examples}

What happens if one desingularizes, say the result of collapsing the second face of a standard $2$-simplex, as in \cref{fig:ch2_Desing_2simpl_collapsed_second_face}? The dashed line segment is meant to indicate that the second face has been collapsed. The dotted lines are meant to illustrate the identifications that arise as a result of the desingularization. The next example is a slight generalization in that it replaces $2$ with $n$ and $\delta _2$ with $\delta _n\cdots \delta _{k+1}$ for some $k$ that replaces $1$. We will use the notion of enforcer from \cref{def:enforcer}.
\begin{example}\label{ex:second_simplest_non-trivial_desingularization}
Let $\mu :[k]\to [n]$ be the face operator defined by
\[\mu =\delta _n\cdots \delta _{k+1}.\]
Consider the cocartesian square
\begin{equation}
\label{eq:first_diagram_ex_second_simplest_non-trivial_desingularization}
\begin{gathered}
\xymatrix{
\Delta [k] \ar[d]_{\mu } \ar[r] & \Delta [0] \ar[d] \\
\Delta [n] \ar[r]_{\bar{x} } & X
}
\end{gathered}
\end{equation}
in $sSet$, in the non-trivial case $0<k<n$. The non-degenerate simplex $x$ is then not embedded. We will argue that
\[DX\cong \Delta [n-k]\]
by use of the decomposition of $X$ as the pushout above.

The enforcer
\[\rho _x=\sigma _0\cdots \sigma _{k-1}\]
of $x$ fits into the commutative solid diagram
\begin{equation}
\label{eq:second_diagram_ex_second_simplest_non-trivial_desingularization}
\begin{gathered}
\xymatrix{
\Delta [k] \ar[d]_{\mu } \ar[r] & \Delta [0] \ar[d] \ar@/^1pc/[ddr]^{\varepsilon _0} \\
\Delta [n] \ar[r]_{\bar{x} } \ar@/_2pc/[drr]_{\rho _x} & X \ar@{-->}[dr] \\
&& \Delta [n-k]
}
\end{gathered}
\end{equation}
in $sSet$, which gives rise to a canonical dashed map $X\to \Delta [n-k]$. Next, consider the diagram
\begin{equation}
\label{eq:third_diagram_ex_second_simplest_non-trivial_desingularization}
\begin{gathered}
\xymatrix{
\Delta [n] \ar[d]_{\bar{x} } \ar[r]^(.40){\rho _x} & \Delta [n-k] \ar[d] \ar@/^1pc/[ddr]^{id} \\
X \ar@/_1pc/[drr] \ar[r] & Y \ar@{-->}[dr] \\
&& \Delta [n-k]
}
\end{gathered}
\end{equation}
in $sSet$ where $X\to \Delta [n-k]$ is the map from (\ref{eq:second_diagram_ex_second_simplest_non-trivial_desingularization}). In (\ref{eq:third_diagram_ex_second_simplest_non-trivial_desingularization}), the simplicial set $Y$ is the pushout
\[Y=X\sqcup _{\Delta [n]}\Delta [n-k].\]
From the triangle on the right it follows that $\Delta [n-k]\to Y$ is degreewise injective and that $Y\to \Delta [n-k]$ is degreewise surjective.

Meanwhile, the map $\Delta [n-k]\to Y$ is a cobase change of the map $\bar{x}$, which is itself a cobase change of the degreewise surjective map $\Delta [k]\to \Delta [0]$, as is seen from (\ref{eq:first_diagram_ex_second_simplest_non-trivial_desingularization}). Hence, the map $\Delta [n-k]\to Y$ is degreewise surjective. It follows that $Y\xrightarrow{\cong } \Delta [n-k]$ is an isomorphism. Thus $Y$ is seen to be non-singular. From \cref{lem:pushout_along_enforcers_intermediate_desingularization}, we get that
\[DX\cong Y\cong \Delta [n-k],\]
which was our claim.
\end{example}
\noindent The computation of $DX$ in \cref{ex:second_simplest_non-trivial_desingularization} was particularly easy because of the unusual decomposition of $X$ which in turn arose partly from the fact that $X$ was generated by a single non-degenerate simplex.

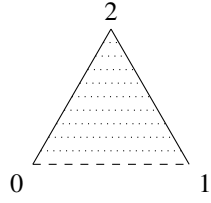
\begin{figure}
\centering
\begin{tikzpicture}[scale=0.4]
\coordinate (2) at (90:3cm);
\coordinate (0) at (210:3cm);
\coordinate (1) at (-30:3cm);

\node [above] at (2) {2};
\node [below left] at (0) {0};
\node [below right] at (1) {1};

\draw (0.north east)--(2.south) coordinate[midway](02);
\draw [dashed] (0.north east)--(1.north west) coordinate[midway](01);
\draw (1.north west)--(2.south) coordinate[midway](12);

\foreach \s in {1,...,9}
	\draw [dotted] (barycentric cs:0=\s/10,2=1-\s/10)--(barycentric cs:1=\s/10,2=1-\s/10);

\end{tikzpicture}
\caption{Desingularizing the standard $2$-simplex whose second face has been collapsed.}
\label{fig:ch2_Desing_2simpl_collapsed_second_face}
\end{figure}

Let us consider a few models of spheres. The first ones have desingularizations that can be calculated simply by an inspection and ad hoc arguments.
\begin{example}\label{ex:desingularization_model_n-sphere}
Consider the cocartesian square
\begin{displaymath}
\xymatrix{
\partial \Delta [n] \ar[d] \ar[r] & \Delta [0] \ar[d] \\
\Delta [n] \ar[r]_(.35){\bar{x} } & \Delta [n]/\partial \Delta [n]
}
\end{displaymath}
in $sSet$. The non-degenerate simplex $x$ is not embedded if $n>0$. In the case when $n=0$, we get
\[\Delta [0]/\partial \Delta [0]\cong \Delta [0]\sqcup \Delta [0],\]
which is non-singular. In other words, desingularization has no effect on $\Delta [0]/\partial \Delta [0]$. Else if $n>0$, then we can apply \cref{ex:simplest_non-trivial_desingularization} to obtain
\[D(\Delta [n]/\partial \Delta [n])\cong \Delta [0].\]
The latter calculation shows that desingularization has homotopically destructive tendencies.
\end{example}
\noindent We record the results from \cref{ex:desingularization_model_n-sphere} in \cref{tab:Desing_spherical_mod} below, which is explained shortly.

What if we subdivide the model $\Delta [n]/\partial \Delta [n]$ of the $n$-sphere before applying desingularization? Let $Sd$ denote the Kan subdivision. See \cite[Def.~2.2.7]{WJR13} or \cite[p.~148]{FP90} for a definition. The Kan subdivision is the left Kan extension of barycentric subdivision along the Yoneda embedding \cite[p.~37]{WJR13}, so to get a mental picture of its effect one can think of barycentric subdivision. There are illustrations of desingularizations of Kan subdivisions in \cref{fig:ch2_Desing_singlysubd_2simpl} and \cref{fig:ch2_Desing_doublysubd_2simpl}. Note that $Sd$ preserves degreewise injective maps \cite[Cor.~4.2.9]{FP90} and that it has a right adjoint \cite[Prop.~4.2.10]{FP90}. In particular, the Kan subdivision preserves attachings.

At this point, we introduce the \textbf{Barratt nerve} \cite[Def.~2.2.3]{WJR13}
\[BX=N(X^\sharp )\]
of a simplicial set $X$ for comparison with $Sd\, X$. Here, we let the set $X^\sharp$ of non-degenerate simplices have the partial order $\leq$ defined by letting $y\leq x$ if $y$ is a face of $x$. We think of a partially ordered set, poset for short, $(P,\leq )$ as a small category by letting the elements of $P$ be the objects and we let there be a morphism $p\to p'$ if $p\leq p'$. One can interpret $B$ as an endofunctor of simplicial sets, although its image is in the full subcategory $nsSet$. Indeed, the Barratt nerve $BX$, of every simplicial set $X$, is the simplicial set associated with an ordered simplicial complex. There is \cite[p.~37]{WJR13} a natural degreewise surjective \cite[Lem.~2.2.10]{WJR13} map
\[b_X:Sd\, X\to BX,\]
which is an isomorphism if and only if $X$ is non-singular \cite[Lem.~2.2.11]{WJR13}.

\begin{table}
\centering
\begin{tikzpicture}
    \matrix(dict)[matrix of nodes,
    nodes={align=center,text width=8em},
    row 1/.style={anchor=south},
    column 1/.style={nodes={text width=4.5em,align=right}}
    ]{
        $DSd^k(X)$ & $k=0$ & $k=1$ & $k=2$ \\
        $n=0$ & $\Delta [0]\sqcup \Delta [0]$ & $\Delta [0]\sqcup \Delta [0]$ & $\Delta [0]\sqcup \Delta [0]$ \\
        $n=1$ & $\Delta [0]$ & $\Delta [1]\sqcup _{\partial \Delta [1]}\Delta [1]$ & $A\sqcup _{\partial A}A$ \\
        $n=2$ & $\Delta [0]$ & $\Delta [1]$ & $S(12-gon)$ \\
    };
    \draw(dict-1-1.south west)--(dict-1-4.south east);
    \draw(dict-1-1.north east)--(dict-4-1.south east);
\end{tikzpicture}
\caption{Desingularizations of models of certain spheres. Here, we denote $X=\Delta [n]/\partial \Delta [n]$, $A=Sd(\Delta [1])$ and $\partial A=Sd(\partial \Delta [1])$.}
\label{tab:Desing_spherical_mod}
\end{table}

Let $Sd^k$ denote the Kan subdivision applied $k$ times, for each integer $k\geq 0$. We consider $X=Sd^k(\Delta [n]/\partial \Delta [n])$ for $0\leq n\leq 2$ and $0\leq k\leq 2$. As we obtain desingularizations of these simplicial sets, we record the results in \cref{tab:Desing_spherical_mod}. \cref{ex:desingularization_model_n-sphere} took care of the case when $k=0$. Furthermore, we have the following calculations.
\begin{example}\label{ex:all_subdivisions_sphere_dim_zero_coincidence_dim_one}
For every $k\geq 0$, we have
\[Sd^k(\Delta [0]/\partial \Delta [0])\cong \Delta [0]\sqcup \Delta [0].\]
So too, for $k=1$ and $k=2$. Applying desingularization has no effect as $Sd^k(\Delta [0]/\partial \Delta [0])$ is already non-singular. By a coincidence, the simplicial set $Sd(\Delta [1]/\partial \Delta [1])$ is also non-singular, as we explain next.

The commutative square
\begin{displaymath}
\xymatrix{
[0] \ar[d]_{\varepsilon _1} \ar[rr]^{\varepsilon _1} && [1] \ar[d]^{0\mapsto \varepsilon _0,\; 1\mapsto \iota } \\
[1] \ar[rr]_{0\mapsto \varepsilon _1,\; 1\mapsto \iota } && \Delta [1]^\sharp
}
\end{displaymath}
where $\iota$ denotes the identity, gives rise to a canonical map
\[\Delta [1]\sqcup _{\Delta [0]}\Delta [1]\xrightarrow{\cong } B(\Delta [1])\]
that is an isomorphism. Inverting it and forming the composite
\[Sd(\Delta [1])\xrightarrow{b_{\Delta [1]}} B(\Delta [1])\xrightarrow{\cong } \Delta [1]\sqcup _{\Delta [0]}\Delta [1]\]
which is in turn precomposed with the canonical map
\[\Delta [1]\sqcup \Delta [1]\to \Delta [1]\sqcup _{\partial \Delta [1]}\Delta [1]\]
yields the solid diagram
\begin{displaymath}
\xymatrix@=0.9em{
Sd(\partial \Delta [1]) \ar[d] \ar[r] & Sd(\Delta [0]) \ar[d] \ar@/^2pc/[ddr] \\
Sd(\Delta [1]) \ar@/_2pc/[drr] \ar[r] & Sd(\Delta [1]/\partial \Delta [1]) \ar@{-->}[dr]^\cong \\
&& \Delta [1]\sqcup _{\partial \Delta [1]}\Delta [1]
}
\end{displaymath}
that commutes, thus giving rise to a canonical dashed map that is in fact an isomorpism. Then the desingularization is trivially
\[DSd(\Delta [1]/\partial \Delta [1])\cong \Delta [1]\sqcup _{\partial \Delta [1]}\Delta [1],\]
which is also recorded in \cref{tab:Desing_spherical_mod}.
\end{example}
\noindent We resume with slightly more complicated examples.

By \cref{lem:desing_unique_factorization_through_unit}, we obtain a map $t_X:DSd\, X\to BX$. Because $\eta _{Sd\, X}$ and $b_X$ are natural, because $\eta _{Sd\, X}$ is degreewise surjective and because the target of $b_X$ is non-singular, the map $t_X$ can be interpreted as a natural map between functors $sSet\to nsSet$ when we corestrict $B$ to $nsSet$.

We will prove the following result.
\begin{proposition}\label{prop:double_subdivision_sphere_low_dimension}
The map
\[DSd^2(\Delta [n]/\partial \Delta [n])\xrightarrow{t_{Sd(\Delta [n]/\partial \Delta [n])}} BSd(\Delta [n]/\partial \Delta [n])\]
is an isomorphism for $0\leq n\leq 2$.
\end{proposition}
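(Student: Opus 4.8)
The plan is to exploit the universal property that defines $t_{Sd\,W}$ together with \cref{lem:pushout_along_enforcers_intermediate_desingularization}, writing $W=\Delta[n]/\partial\Delta[n]$ and $X=Sd\,W$ throughout, so that the map in question is $t_X\colon DSd\,X\to BX$. By construction $t_X$ is the unique factorization of $b_X\colon Sd\,X\to BX$ through $\eta_{Sd\,X}$ supplied by \cref{lem:desing_unique_factorization_through_unit} (legitimate because $BX$ is non-singular), so that $t_X\circ\eta_{Sd\,X}=b_X$. Since $b_X$ is degreewise surjective and $\eta_{Sd\,X}$ is an epimorphism, $t_X$ is automatically degreewise surjective; hence the whole content of the statement is that $t_X$ is also degreewise injective, equivalently that $b_X$ is itself the least drastic non-singular quotient of $Sd\,X$, i.e. coincides with $\eta_{Sd\,X}$ up to isomorphism.

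I would dispose of the cases $n=0$ and $n=1$ first, because there $X=Sd\,W$ is itself non-singular: for $n=0$ one has $W\cong\Delta[0]\sqcup\Delta[0]$ and hence $X\cong\Delta[0]\sqcup\Delta[0]$, while for $n=1$ the isomorphism $X\cong\Delta[1]\sqcup_{\partial\Delta[1]}\Delta[1]$ of \cref{ex:all_subdivisions_sphere_dim_zero_coincidence_dim_one} exhibits $X$ as non-singular. When $X$ is non-singular the map $b_X$ is an isomorphism, and $Sd\,X\cong BX=N(X^\sharp)$ is the nerve of a poset and therefore non-singular, so $\eta_{Sd\,X}$ is an isomorphism as well; consequently $t_X=b_X\circ\eta_{Sd\,X}^{-1}$ is an isomorphism.

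The case $n=2$ carries the real content, since there $X=Sd\,W$ is genuinely singular. First I would record the combinatorics of $X$: collapsing $\partial\Delta[2]$ inside the barycentric subdivision of $\Delta[2]$ leaves two vertices — the image $v$ of the whole boundary and the barycenter $c$ — together with six non-degenerate edges, each running from $v$ to $c$, and six non-degenerate $2$-simplices, each with vertex pattern $(v,v,c)$; tracking which edges bound which triangles shows that the edges and triangles assemble into a single $12$-cycle in the face poset $X^\sharp$. Consequently $BX=N(X^\sharp)$ is the suspension of this $12$-gon, i.e. the simplicial set $S(12\text{-gon})$ of \cref{tab:Desing_spherical_mod}, with $v$ and $c$ as the two suspension points. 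To compute $DSd\,X=DSd^2W$ I would then apply \cref{lem:pushout_along_enforcers_intermediate_desingularization} to $Sd\,X$: taking $J$ to be the set of non-embedded non-degenerate simplices of $Sd\,X$, form the pushout $Y$ of $Sd\,X$ along the enforcers $\sqcup_{j\in J}\rho_j$, verify that $Y$ is non-singular, and identify $Y$ with $BX$ compatibly with the two maps out of $Sd\,X$. The lemma then yields $DSd\,X\cong Y\cong BX$, and since both isomorphisms are compatible with the maps out of $Sd\,X$, the uniqueness clause of \cref{lem:desing_unique_factorization_through_unit} forces the resulting isomorphism to be exactly $t_X$.

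The main obstacle is the final step for $n=2$: one must make the second subdivision $Sd^2W$ explicit enough to enumerate the enforcers of its non-embedded non-degenerate simplices, check that pushing out along them produces a non-singular simplicial set, and match that pushout with the suspended $12$-gon. The non-singularity of the pushout is precisely the hypothesis that lets \cref{lem:pushout_along_enforcers_intermediate_desingularization} deliver the isomorphism for free, so verifying it — rather than any separate injectivity estimate — is the crux; the bookkeeping is finite but sizeable, and is best organized around the $12$-cycle structure of $X^\sharp$ found above.
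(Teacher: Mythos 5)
Your proposal is correct and follows essentially the same route as the paper: the cases $n=0,1$ are handled exactly as in \cref{ex:all_subdivisions_sphere_dim_zero_coincidence_dim_one} and \cref{ex:double_subdivision_sphere_dim_one} via non-singularity of $Sd(\Delta[n]/\partial\Delta[n])$, and for $n=2$ the paper likewise pushes out $Sd^2(\Delta[2]/\partial\Delta[2])$ along the enforcers (all equal to $\sigma_0$) of its twelve non-embedded non-degenerate $2$-simplices, checks non-singularity of the pushout by bookkeeping against the suspended $12$-gon (\cref{fig:ch2_Desing_doublysubd_2simpl}), and concludes with \cref{lem:pushout_along_enforcers_intermediate_desingularization}. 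The only cosmetic difference is the final identification of the isomorphism with $t_X$, which you obtain from the uniqueness clause of \cref{lem:desing_unique_factorization_through_unit} while the paper appeals to naturality of $t$; your deferred ``sizeable bookkeeping'' is precisely the verification the paper also delegates to its figure.
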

\noindent For the proof of \cref{prop:double_subdivision_sphere_low_dimension}, note we have already taken care of the case when $n=0$ in \cref{ex:all_subdivisions_sphere_dim_zero_coincidence_dim_one}. The case when $n=1$ follows from \cref{ex:double_subdivision_sphere_dim_one} below.
\begin{example}\label{ex:double_subdivision_sphere_dim_one}
By \cref{ex:all_subdivisions_sphere_dim_zero_coincidence_dim_one}, the simplicial set $Sd(\Delta [1]/\partial \Delta [1])$ is non-singular. Therefore the map $b_{Sd(\Delta [1]/\partial \Delta [1])}$ is an isomorphism, which implies that the map
\[t_{Sd(\Delta [1]/\partial \Delta [1])}:DSd^2(\Delta [1]/\partial \Delta [1])\xrightarrow{\cong } BSd(\Delta [1]/\partial \Delta [1])\]
is an isomorphism.
\end{example}
\noindent To prove \cref{prop:double_subdivision_sphere_low_dimension}, it remains to consider the case when $n=2$.

\begin{figure}
\centering
\begin{tikzpicture}[scale=0.8]
\coordinate (2) at (90:3cm);
\coordinate (0) at (210:3cm);
\coordinate (1) at (-30:3cm);

\node [above] at (2) {2};
\node [below left] at (0) {0};
\node [below right] at (1) {1};

\draw [dashed] (0.north east)--(2.south) coordinate[midway](02);
\draw [dashed] (0.north east)--(1.north west) coordinate[midway](01);
\draw [dashed] (1.north west)--(2.south) coordinate[midway](12);

\coordinate (012) at (barycentric cs:0=1,1=1,2=1);

\foreach \x in {(0),(01),(1),(12),(2),(02)}
	\draw (012)--\x;

\foreach \x in {0,1}
	\foreach \y in {1,2}
		\foreach \s in {1,...,9}
			\draw [dotted] (barycentric cs:\x=\s/10,012=1-\s/10)--(barycentric cs:\y=\s/10,012=1-\s/10);

\end{tikzpicture}
\caption{Desingularizing the Kan subdivision of the $2$-simplex with collapsed boundary.}
\label{fig:ch2_Desing_singlysubd_2simpl}
\end{figure}

Before we prove \cref{prop:double_subdivision_sphere_low_dimension} in the case when $n=2$, we contemplate how to desingularize $Sd(\Delta [2]/\partial \Delta [2])$, which is a similar task, although slightly easier. We make use of the notion of enforcer from \cref{def:enforcer}.
\begin{example}\label{ex:desingularization_model_n-sphere_subdivided}
Consider the cocartesian square
\begin{equation}
\label{eq:first_diagram_ex_desingularization_model_n-sphere_subdivided}
\begin{gathered}
\xymatrix{
Sd(\partial \Delta [2]) \ar[d] \ar[r] & Sd(\Delta [0]) \ar[d] \\
Sd(\Delta [2]) \ar[r] & Sd\, X
}
\end{gathered}
\end{equation}
in $sSet$, where we have written $X=\Delta [2]/\partial \Delta [2]$ for brevity. We will prove that
\begin{equation}\label{eq:first_equation_desingularization_model_n-sphere_subdivided}
DSd\, X\cong \Delta [1].
\end{equation}
In \cref{fig:ch2_Desing_singlysubd_2simpl}, we illustrate the effect of desingularizing $Sd\, X$. This illustration indicates the idea of the proof and is helpful in bookkeeping. The dashed line segments that are part of the boundary are meant to indicate that the boundary has been collapsed in order to form $\Delta [2]/\partial \Delta [2]$. The dotted line segments are meant to illustrate how identifications arise when desingularizing.

The simplicial set $Sd\, X$ is generated by six (non-degenerate) $2$-simplices as $Sd(\Delta [2])$ is generated by six non-degenerate $2$-simplices and as
\[Sd(\Delta [2])\to Sd\, X\]
is degreewise surjective. We will name these six generators. Let the simplex $y_1$ be the image under
\[B(\Delta [2])\cong Sd(\Delta [2])\to Sd\, X\]
of the simplex $\{0<01<012\}$. Furthermore, let $y_2$ be the image of the next non-degenerate $2$-simplex $\{1<01<012\}$ as we move counterclockwise in \cref{fig:ch2_Desing_singlysubd_2simpl} and so on up to and including $j=6$. Thus the set
\[\{y_j\} _{j\in J},\, J=\{ 1,\dots ,6\}\]
generates $Sd\, X$.

The simplicial set $Sd(\Delta [2])$ has seven $0$-simplices that correspond to the seven elements of $\Delta [2]^\sharp$. The six $0$-simplices on the boundary $Sd(\partial \Delta [2])$ are identified with each other when $Sd\, X$ is formed from $Sd(\Delta [2])$. However, the $0$-simplex $012$ is not identified with these. Write $z_j=\eta _{Sd\, X}(y_j)$ for each $j\in J$. Each of the $2$-simplices $y_j$, $j\in J$, is such that the vertices $y_j\varepsilon _0$ and $y_j\varepsilon _1$ are on the boundary and that $y_j\varepsilon _2$ is equal to $012$. Thus we see that each of the simplices $y_j$, $j\in J$, has the elementary degeneracy operator
\[\rho _{y_j}=\sigma _0\]
as its enforcer. Let $\rho$ denote this common enforcer.

For each $j\in J$, write $z_j=\eta _{Sd\, X}(y_j)$. From \cref{prop:role_of_enforcers}, we have the commutative square
\begin{equation}
\label{eq:second_diagram_ex_desingularization_model_n-sphere_subdivided}
\begin{gathered}
\xymatrix{
\bigsqcup _{j\in J}\Delta [2] \ar[d]_{\vee _{j\in J}(\bar{y} _j)} \ar[rr]^{\sqcup _{j\in J}(\rho )} && \bigsqcup _{j\in J}\Delta [1] \ar[d]^{\vee _{j\in J}(\bar{w} _j)} \\
Sd\, X \ar[rr]_{\eta _{Sd\, X}} && UDSd\, X
}
\end{gathered}
\end{equation}
in $sSet$, where $w_j$ is the canonical degeneracy of the non-degenerate part of $z_j$, $j\in J$.  In this case, the simplices $w_j$, $j\in J$, are embedded and therefore non-degenerate. This way we see how the simplices $z_j$, $j\in J$, are degenerate.

Because the simplices $z_j$, $j\in J$, are all degenerate it follows that $DSd\, X$ is generated by the images under $\eta _{Sd\, X}$ of the six embedded $1$-simplices of $Sd\, X$. We will argue that all of these images are equal.

Pick a $j\in J$. Two of the six embedded $1$-simplices of $Sd\, X$ are the faces $y_j\delta _1$ and $y_j\delta _0$ of $y_j$. Because $\delta _1$ and $\delta _0$ are both sections of $\rho$, we get that
\begin{displaymath}
\begin{array}{rclclcl}
z_j\delta _1 & = & (w_j\rho )\delta _1 & = & w_j(\rho \delta _1) & = & w_j \\
z_j\delta _0 & = & (w_j\rho )\delta _0 & = & w_j(\rho \delta _0) & = & w_j.
\end{array}
\end{displaymath}
Thus it follows that the image under $\eta _{Sd\, X}$ of each of the faces $y_j\delta _1$ and $y_j\delta _0$ is equal to $w_j$. Let us express this with $y_j\delta _1\sim y_j\delta _0$ for each $j\in J$.

By moving counterclockwise in \cref{fig:ch2_Desing_singlysubd_2simpl}, we get that
\begin{displaymath}
\begin{array}{rclcl}
y_1\delta _0 & = & y_2\delta _0 & \sim & y_2\delta _1 \\
y_2\delta _1 & = & y_3\delta _1 & \sim & y_3\delta _0 \\
y_3\delta _0 & = & y_4\delta _0 & \sim & y_4\delta _1 \\
y_4\delta _1 & = & y_5\delta _1 & \sim & y_5\delta _0 \\
y_5\delta _0 & = & y_6\delta _0 & \sim & y_6\delta _1.
\end{array}
\end{displaymath}
This shows that
\[w_1=w_2=\cdots =w_6,\]
implying that (\ref{eq:first_equation_desingularization_model_n-sphere_subdivided}) holds.
\end{example}
\noindent To complete \cref{tab:Desing_spherical_mod}, the only remaining case is when $k=2$ and $n=2$.

Note that the functor $BSd$ replaces a simplicial set with an ordered simplicial complex of the same homotopy type \cite[Ex.~3--8, pp.~219--220]{FP90}. To conjecture the homotopical content of the claim of \cref{prop:double_subdivision_sphere_low_dimension} one uses the sort of intuition that comes from knowledge of regular neighborhood theory, as explained in \cite[§3]{RS72} or \cite[§II]{Hu69}. For example, the reason that collapsing the boundary of $Sd^k(\Delta [2])$ in the category $nsSet$ is an operation that preserves the homotopy type in the case when $k=2$, but not in the case when $k=1$ is indicated and illustrated in a remark in \cite[p.~51]{Hu69}. It turns out that the double subdivision creates a sufficiently nice neighborhood around the boundary. \cref{fig:ch2_Desing_doublysubd_2simpl}, which is used for bookkeeping in the proof of \cref{prop:double_subdivision_sphere_low_dimension}, illustrates the phenomenon.  

\begin{figure}
\centering
\begin{tikzpicture}

\coordinate (2) at (90:3cm);
\coordinate (0) at (210:3cm);
\coordinate (1) at (-30:3cm);

\node [above] at (2) {2};
\node [below left] at (0) {0};
\node [below right] at (1) {1};

\draw [dashed] (0.north east)--(2.south) coordinate[midway](02);
\draw [dashed] (0.north east)--(1.north west) coordinate[midway](01);
\draw [dashed] (1.north west)--(2.south) coordinate[midway](12);

\coordinate (012) at (barycentric cs:0=1,1=1,2=1);

\draw (0.north east)--(012) coordinate[midway](0<012);
\draw (1.north west)--(012) coordinate[midway](1<012);
\draw (2.south)--(012) coordinate[midway](2<012);
\draw (01)--(012) coordinate[midway](01<012);
\draw (12)--(012) coordinate[midway](12<012);
\draw (02)--(012) coordinate[midway](02<012);

\coordinate (0<01<012) at (barycentric cs:0=1,01=1,012=1);
\coordinate (0<01) at (barycentric cs:0=0.5,01=0.5);
\foreach \x in {(0),(0<01),(01),(012)}
	\draw (0<01<012)--\x;
\foreach \x in {(0<012),(01<012)}
	\draw [thick] (0<01<012)--\x;

\coordinate (1<01<012) at (barycentric cs:1=1,01=1,012=1);
\coordinate (1<01) at (barycentric cs:1=0.5,01=0.5);
\foreach \x in {(1),(1<01),(01),(012)}
	\draw (1<01<012)--\x;
\foreach \x in {(01<012),(1<012)}
	\draw [thick] (1<01<012)--\x;

\coordinate (1<12<012) at (barycentric cs:1=1,12=1,012=1);
\coordinate (1<12) at (barycentric cs:1=0.5,12=0.5);
\foreach \x in {(1),(1<12),(12),(012)}
	\draw (1<12<012)--\x;
\foreach \x in {(12<012),(1<012)}
	\draw [thick] (1<12<012)--\x;

\coordinate (2<12<012) at (barycentric cs:2=1,12=1,012=1);
\coordinate (2<12) at (barycentric cs:2=0.5,12=0.5);
\foreach \x in {(2),(2<12),(12),(012)}
	\draw (2<12<012)--\x;
\foreach \x in {(12<012),(2<012)}
	\draw [thick] (2<12<012)--\x;
	
\coordinate (2<02<012) at (barycentric cs:2=1,02=1,012=1);
\coordinate (2<02) at (barycentric cs:2=0.5,02=0.5);
\foreach \x in {(2),(2<02),(02),(012)}
	\draw (2<02<012)--\x;
\foreach \x in {(02<012),(2<012)}
	\draw [thick] (2<02<012)--\x;
	
\coordinate (0<02<012) at (barycentric cs:0=1,02=1,012=1);
\coordinate (0<02) at (barycentric cs:0=0.5,02=0.5);
\foreach \x in {(0),(0<02),(02),(012)}
	\draw (0<02<012)--\x;
\foreach \x in {(02<012),(0<012)}
	\draw [thick] (0<02<012)--\x;

\foreach \x in {0.2,0.4,0.6,0.8}
	\draw [dotted] (barycentric cs:0=\x,0<01<012=1-\x)--(barycentric cs:01=\x,0<01<012=1-\x);

\foreach \x in {0.2,0.4,0.6,0.8}
	\draw [dotted] (barycentric cs:1=\x,1<01<012=1-\x)--(barycentric cs:01=\x,1<01<012=1-\x);

\foreach \x in {0.2,0.4,0.6,0.8}
	\draw [dotted] (barycentric cs:1=\x,1<12<012=1-\x)--(barycentric cs:12=\x,1<12<012=1-\x);

\foreach \x in {0.2,0.4,0.6,0.8}
	\draw [dotted] (barycentric cs:2=\x,2<12<012=1-\x)--(barycentric cs:12=\x,2<12<012=1-\x);

\foreach \x in {0.2,0.4,0.6,0.8}
	\draw [dotted] (barycentric cs:2=\x,2<02<012=1-\x)--(barycentric cs:02=\x,2<02<012=1-\x);

\foreach \x in {0.2,0.4,0.6,0.8}
	\draw [dotted] (barycentric cs:0=\x,0<02<012=1-\x)--(barycentric cs:02=\x,0<02<012=1-\x);
\end{tikzpicture}
\caption{Desingularizing the double Kan subdivision of the standard $2$-simplex with collapsed boundary.}
\label{fig:ch2_Desing_doublysubd_2simpl}
\end{figure}
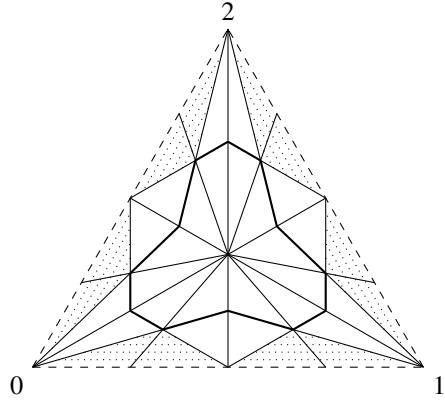

We are ready to prove the proposition. The method is similar to that of \cref{ex:desingularization_model_n-sphere_subdivided}.
\begin{proof}[Proof of \cref{prop:double_subdivision_sphere_low_dimension}.]
We will argue that
\begin{equation}
\label{eq:first_expression_proof_of_prop_double_subdivision_sphere_low_dimension}
DSd^2\, X\cong S(12-gon)
\end{equation}
where $X=\Delta [2]/\partial \Delta [2]$. By this, we mean that $DSd^2\, X$ is the suspension of a $12$-gon, which is what $BSd\, X$ is. As the cases when $n=0$ and $n=1$ were taken care of by \cref{ex:all_subdivisions_sphere_dim_zero_coincidence_dim_one} and \cref{ex:double_subdivision_sphere_dim_one}, respectively, the argument below finishes the proof.

To study $DSd^2\, X$ is to study the diagram that we get by applying $Sd$ to (\ref{eq:first_diagram_ex_desingularization_model_n-sphere_subdivided}). For an illustration of the formation of $DSd^2\, X$ from $Sd^2\, X$, see \cref{fig:ch2_Desing_doublysubd_2simpl}. We use the same conventions as in \cref{fig:ch2_Desing_singlysubd_2simpl} and one additional convention. Namely, there are exactly twelve line segments that are thicker than the others. These form the $12$-gon we mentioned. The simplicial set $BSd\, X$ is the nerve of the poset
\begin{equation}
\label{eq:first_diagram_proof_prop_double_subdivision_sphere_low_dimension}
\begin{gathered}
\xymatrix@=0.6em{
&&&&&& [012] \\
\\
\\
&&&&&& \bullet \ar@{..>}[lllld] \ar@{<-}[uuu] \ar@/^0.3pc/@{<..}[ddddddddd] \ar@{..>}[drrrr] \\
&& \bullet \ar@{<..}[ddddddddrrrr] \ar@{<-}[uuuurrrr] &&&&&&&& \bullet \ar@{<..}[lllldddddddd] \ar@{<-}[lllluuuu] \\
&& \bullet \ar@{..>}[lld] \ar@{<..}[dddddddrrrr] \ar@{..>}[u] \ar@{<-}[uuuuurrrr] &&&&&&&& \bullet \ar@{<-}[lllluuuuu] \ar@{..>}[u] \ar@{<..}[llllddddddd] \ar@{..>}[drr] \\
\bullet \ar@{<..}[ddddddrrrrrr] \ar@{<-}[uuuuuurrrrrr] &&&&&&&&&&&& \bullet \ar@{<-}[lllllluuuuuu] \ar@{<..}[lllllldddddd] \\
\bullet \ar[u] \ar@{<-}[dddddrrrrrr] \ar@{<-}[uuuuuuurrrrrr] \ar[drr] &&&&&& \bullet \ar[lllld] \ar@/^0.3pc/@{<-}[uuuuuuu] \ar[drrrr] &&&&&& \bullet \ar[lld] \ar@{<-}[llllllddddd] \ar@{<-}[lllllluuuuuuu] \ar[u] \\
&& \bullet \ar@{<-}[ddddrrrr] \ar@{<-}[uuuuuuuurrrr] &&&&&&&& \bullet \ar@{<-}[lllluuuuuuuu] \ar@{<-}[lllldddd] \\
\\
\\
\\
&&&&&& [0] \ar[uuuuu]
}
\end{gathered}
\end{equation}
namely $Sd(X)^\sharp$. In (\ref{eq:first_diagram_proof_prop_double_subdivision_sphere_low_dimension}) we have drawn the $0$-simplex $012$ as the cone point at the top.

The cone point at the bottom, which we denote $[0]$, is the $0$-simplex that is the result of the identifications
\[0\sim 01\sim 1\sim 12\sim 2\sim 02.\]
These names arise in an intuitive manner from considering the poset $\Delta [2]^\sharp$ whose objects correspond to the $0$-simplices of
\[B(\Delta [2])\cong Sd(\Delta [2])\]
whose non-degenerate simplices in turn correspond to the $0$-simplices of
\[B^2(\Delta [2])\cong BSd(\Delta [2])\cong Sd^2(\Delta [2]).\]
For example, the object $0$ arises from $\varepsilon _0$ and $1$ from $\varepsilon _1$. Furthermore, the object $02$ arises from $\delta _1$. The objects of the poset $Sd(X)^\sharp$ that are not cone points are the non-degenerate $1$-simplices of $Sd\, X$, of which there are six, and the non-degenerate $2$-simplices, of which there are also six.

We proceed by naming the twelve non-embedded non-degenerate $2$-simplices of $Sd^2\, X$. First, we let $y_1$ be the image of
\[\{ \{0\} <\{0<01\} <\{0<01<012\} \}\]
under $Sd^2(\Delta [2])\to Sd^2\, X$. Next, we let $y_2$ be the image of the next $2$-simplex as we move counterclockwise in \cref{fig:ch2_Desing_doublysubd_2simpl} up to and including $j=12$. Write $J=\{ 1,\dots ,12\}$. Each of the simplices $y_j$, $j\in J$, has the elementary degeneracy operator
\[\rho _{y_j}=\sigma _0\]
as its enforcer. Let $\rho$ denote this common enforcer.

From \cref{prop:role_of_enforcers}, we have the cocartesian square
\begin{equation}
\label{eq:second_diagram_proof_prop_double_subdivision_sphere_low_dimension}
\begin{gathered}
\xymatrix{
\bigsqcup _{j\in J}\Delta [2] \ar[d]_{\vee _{j\in J}(\bar{y} _j)} \ar[rr]^{\sqcup _{j\in J}(\rho )} && \bigsqcup _{j\in J}\Delta [1] \ar[d] \\
Sd^2\, X \ar[rr] && Z
}
\end{gathered}
\end{equation}
in $sSet$. Let $z_j$, $j\in J$, be the image of $y_j$ under $Sd^2\, X\to Z$. Suppose $z_j=w_j\rho$ for some simplex $w_j$, $j\in J$. Then $w_j$ is embedded as $Sd^2\, X\to Z$ is injective in degree $0$.

The elementary face operators $\delta _1$ and $\delta _0$ are both sections of $\rho$, so we have
\begin{displaymath}
\begin{array}{rclclcl}
z_j\delta _1 & = & (w_j\rho )\delta _1 & = & w_j(\rho \delta _1) & = & w_j \\
z_j\delta _0 & = & (w_j\rho )\delta _0 & = & w_j(\rho \delta _0) & = & w_j.
\end{array}
\end{displaymath}
for each $j\in J$. It follows that the image under $Sd^2\, X\to Z$ of each of the faces $y_j\delta _1$ and $y_j\delta _0$ is equal to $w_j$. Let us express this with $y_j\delta _1\sim y_j\delta _0$.

Suppose $j\in J$ odd. Then
\[y_j\delta _1\sim y_j\delta _0=y_{j+1}\delta _0\sim y_{j+1}\delta _1.\]
Thus we observe that $w_j=w_{j+1}$. We get that $Z$ is non-singular by the bookkeeping performed with the aid of \cref{fig:ch2_Desing_doublysubd_2simpl}. From \cref{lem:pushout_along_enforcers_intermediate_desingularization}, it follows that the simplicial set $Z$ is the desingularization of $Sd^2\, X$. Moreover, the simplicial set $Z$ is the nerve of (\ref{eq:first_diagram_proof_prop_double_subdivision_sphere_low_dimension}). The naturality of $t_{Sd\, X}$ shows that it is an isomorphism.
\end{proof}

\section{Iterative description}
\label{sec:description}

In the appendix of his PhD thesis, Gaunce Lewis Jr. \cite[p.~158]{Le78} makes explicit the least drastic way of transforming a $k$-space into a compactly generated space, which is (defined as) a space that is both a $k$-space and a weak Hausdorff space. Lewis describes an iterative process. At each stage of the process, two points are identified whenever it is impossible to separate them by (disjoint) open sets.

We will provide an iterative description of the process of forming $UDX$ from $X$ that is analogous to Lewis' method. In the least drastic way possible, we want to make a quotient of $X$ so that the vertices of any non-degenerate simplex are pairwise distinct. In other words, any non-degenerate simplex of $X$ whose vertices are not pairwise distinct, must be made degenerate. For this purpose, we will use the notion of enforcer from \cref{def:enforcer}.

In relation to \cref{thm:main_result_itdesing}, there is a systematic study of reflective subcategories provided by S. Baron \cite{Ba69}. First, $nsSet$ is epi-reflective as the map $X\to DX$ is epic in general. Second, Baron discusses the possibility of factoring the reflector through a unique intermediate category.

In the following way, we define a functor $J:sSet\to sSet$ together with a natural quotient map $X\to JX$ that $\eta _X$ factors through. The functor $J$ is thought of as a preferred first step towards making a simplicial set non-singular. We have taken the symbol $J$ because Lewis uses it to denote his analogous endofunctor of $k$-spaces.

Let $X$ be a simplicial set. Given a non-degenerate simplex $x$ of $X$, we let $n_x$ denote its degree. Recall the enforcer $\rho _x:[n_x]\to [m_x]$ of $x$ from \cref{def:enforcer}. We will construct a cobase change of
\[A=\bigsqcup _{x\in X^\sharp}\Delta [n_x]\xrightarrow{f=\sqcup _{x\in X^\sharp }(\rho _x)} \bigsqcup _{x\in X^\sharp}\Delta [m_x]=B\]
along
\[A\xrightarrow{g=\vee _{x\in X^\sharp }(\bar{x} )} X.\]
The latter map is degreewise surjective as $X^\sharp$ generates $X$.

For each integer $n\geq 0$, define a symmetric binary relation $R'_n$ on $X_n$ by letting $(x,x')\in X_n\times X_n$ be a member of a set $R'_n$ if there are $a,a'\in A_n$ such that
\begin{displaymath}
\begin{array}{rcl}
x & = & g(a) \\
x' & = & g(a') \\
f(a) & = & f(a').
\end{array}
\end{displaymath}
The binary relation $R'_n$, $n\geq 0$, is reflexive as $g$ is degreewise surjective.

Let $R_n$ be the equivalence relation generated by $R'_n$, for each $n$. It follows immediately that the equivalence relations $R_n$, $n\geq 0$, satisfy the condition that the diagrams (\ref{eq:first_diagram_proof_lem_desing_unique_factorization_through_unit}) commute. This implies that we can form the quotient
\[JX=X/R.\]
Thus we obtain the cocartesian square
\begin{displaymath}
 \xymatrix{
 \bigsqcup _{x\in X^\sharp}\Delta [n_x] \ar[d]_{\vee _{x\in X^\sharp }(\bar{x} )} \ar[rr]^{\sqcup _{x\in X^\sharp }(\rho _x)} && \bigsqcup _{x\in X^\sharp}\Delta [m_x] \ar[d] \\
 X \ar[rr] && JX
 }
\end{displaymath}
in $sSet$. By \cref{lem:pushout_along_enforcers_intermediate_desingularization}, it gives rise to a commutative triangle
\begin{equation}
\label{eq:first_diagram_description_iterative}
\begin{gathered}
\xymatrix{
X \ar[dr]_{\eta _X} \ar[rr] && JX \ar[ld] \\
& UDX
}
\end{gathered}
\end{equation}
that factors the unit $\eta _X$ through a quotient map $X\to JX$, which is the identity in the case when $X$ is already non-singular.

For the purposes of making an iterative description of desingularization, the notation above is suitable. However, the construction $JX$ deserves its own name.
\begin{definition}\label{def:enforced_collapse}
Let $X$ be a simplicial set. The map $X\to JX$ is the \textbf{enforced collapse} of $X$.
\end{definition}
\noindent Outside of the context of the iteration process below we may choose to use the following symbol
\begin{notation}
Let $X$ be a simplicial set. Let
\[Cen(X)=JX\]
denote the enforced collapse of $X$.
\end{notation}
\noindent Note that the enforced collapse need not be non-singular, as \cref{ex:infinitely_many_enforced_collapses_needed} shows.
\begin{example}\label{ex:infinitely_many_enforced_collapses_needed}
Consider the $2$-dimensional simplicial set depicted in \cref{fig:ch2_finite_number_of_enforced_collapses_not_enough}. Identify the two $0$-simplices $v$ and $w$. The result can be constructed thus.

Let
\[\mathbb{N} =\{ 1,2,\dots \}\]
and
\[\mathbb{N} _0=\{ 0,1,2,\dots \} .\]
Next, for each $n\in 2\mathbb{N}$, let $B_n=\Delta [2]$. For each $n\in \mathbb{N} _0$, let $A_n=\Delta [1]$. Furthermore, let $C_0=\Delta [1]/\partial \Delta [1]$. Finally, for each $n\in \mathbb{N}$, let $C_n=\Delta [1]$.

Take the pushout $X$ in $sSet$ of
\begin{equation}
\label{eq:first_diagram_ex_infinitely_many_enforced_collapses_needed}
\begin{gathered}
\xymatrix{
\bigsqcup _{n\in \mathbb{N} _0}A_n \ar[d] \ar[r] & \bigsqcup _{n\in 2\mathbb{N} _0}C_n \\
\bigsqcup _{n\in 2\mathbb{N} }B_n
}
\end{gathered}
\end{equation}
where the maps are defined as follows. Let $X$ denote the pushout.

Suppose $n\in \mathbb{N} _0$. In the case when $n\equiv 0\; (mod\, 4)$, we let $A_n\to B_{n+2}$ be the map induced by $\delta _1$ and we let $A_{n+1}\to B_{n+2}$ be the map induced by $\delta _2$. In the case when $n\equiv 2\; (mod\, 4)$, we let $A_n\to B_{n+2}$ be the map induced by $\delta _1$ and we let $A_{n+1}\to B_{n+2}$ be the map induced by $\delta _0$. These maps give rise to the map
\[\bigsqcup _{n\in \mathbb{N} _0}A_n\to \bigsqcup _{n\in 2\mathbb{N} }B_n\]
in (\ref{eq:first_diagram_ex_infinitely_many_enforced_collapses_needed}).

Let $A_0\to C_0$ be the canonical map. Suppose $n\in \mathbb{N} _0$ odd. Then we let $A_n\to C_{n+1}$ and $A_{n+1}\to C_{n+1}$ be the identity $\Delta [1]\to \Delta [1]$. These maps give rise to the map
\[\bigsqcup _{n\in \mathbb{N} _0}A_n\to \bigsqcup _{n\in 2\mathbb{N} _0}C_n\]
in (\ref{eq:first_diagram_ex_infinitely_many_enforced_collapses_needed}).

If $Cen^k$ denotes the $k$-fold iteration of the enforced collapse for $k$ a non-negative integer, then $Cen^k(X)$ is singular for every $k$.
\end{example}
\noindent \cref{ex:infinitely_many_enforced_collapses_needed} shows that one might need an infinite number of enforced collapses in order to make a simplicial set non-singular.

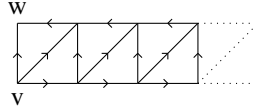
\begin{figure}
\centering
\begin{tikzpicture}[scale=0.4]

\coordinate (0) at (-5,-1);
\coordinate (2) at (-3,-1);
\coordinate (4) at (-1,-1);
\coordinate (6) at (1,-1);

\coordinate (1) at (-5,1);
\coordinate (3) at (-3,1);
\coordinate (5) at (-1,1);
\coordinate (7) at (1,1);

\node [below] at (0) {v};

\node [above] at (1) {w};

\draw (0.north)--(1.south) coordinate[midway](01);
\draw (2.north)--(3.south) coordinate[midway](23);
\draw (4.north)--(5.south) coordinate[midway](45);
\draw (6.north)--(7.south) coordinate[midway](67);

\draw (0.north)--(3.south) coordinate[midway](03);
\draw (2.north)--(5.south) coordinate[midway](25);
\draw (4.north)--(7.south) coordinate[midway](47);
\draw [dotted] (6.north)--(3,1);

\draw (0.north)--(2.north) coordinate[midway](02);
\draw (2.north)--(4.north) coordinate[midway](24);
\draw (4.north)--(6.north) coordinate[midway](46);
\draw [dotted] (6.north)--(3,-1);

\draw (1.south)--(3.south) coordinate[midway](13);
\draw (3.south)--(5.south) coordinate[midway](35);
\draw (5.south)--(7.south) coordinate[midway](57);
\draw [dotted] (7.south)--(3,1);

\draw (01)--+(225:0.2cm);
\draw (01)--+(315:0.2cm);

\draw (23)--+(225:0.2cm);
\draw (23)--+(315:0.2cm);

\draw (45)--+(225:0.2cm);
\draw (45)--+(315:0.2cm);

\draw (67)--+(225:0.2cm);
\draw (67)--+(315:0.2cm);

\draw (02)--+(135:0.2cm);
\draw (02)--+(225:0.2cm);

\draw (24)--+(135:0.2cm);
\draw (24)--+(225:0.2cm);

\draw (46)--+(135:0.2cm);
\draw (46)--+(225:0.2cm);

\draw (13)--+(45:0.2cm);
\draw (13)--+(315:0.2cm);

\draw (35)--+(45:0.2cm);
\draw (35)--+(315:0.2cm);

\draw (57)--+(45:0.2cm);
\draw (57)--+(315:0.2cm);

\draw (03)--+(180:0.2cm);
\draw (03)--+(270:0.2cm);

\draw (25)--+(180:0.2cm);
\draw (25)--+(270:0.2cm);

\draw (47)--+(180:0.2cm);
\draw (47)--+(270:0.2cm);
\end{tikzpicture}
\caption{Simplicial set such that every finite iteration of enforced collapses is singular.}
\label{fig:ch2_finite_number_of_enforced_collapses_not_enough}
\end{figure}

We point out the following, which is not really part of the storyline.
\begin{remark}\label{rem:description_iterative}
The map $\vee _{x\in X^\sharp }(\bar{x} )$ is degreewise surjective because $X^\sharp$ generates $X$. In this way, the construction of the functor $J$ is less arbitrary than the setting in \cref{lem:pushout_along_enforcers_intermediate_desingularization}.

One can, however, replace $X^\sharp$ with a subset and still construct symmetric binary relations $R'_n$, $n\geq 0$, the same way. Each of them is reflexive if and only if the subset generates $X$. We can in either case choose a quotient map as the cobase change of $\sqcup _{x\in X^\sharp }(\rho _x)$ along $\vee _{x\in X^\sharp }(\bar{x} )$.

For example, in the proof of \cref{prop:double_subdivision_sphere_low_dimension}, or more specifically the diagram (\ref{eq:second_diagram_proof_prop_double_subdivision_sphere_low_dimension}), we did choose a suitable subset of the set of non-degenerate simplices to perform a desingularization.
\end{remark}
\noindent \cref{rem:description_iterative} might be useful in some cases as suggested by the proof of \cref{prop:double_subdivision_sphere_low_dimension}.

To define $J$ on morphisms $f:X\to Y$ we need a diagram of the form
\begin{equation}
\label{eq:second_diagram_description_iterative}
\begin{gathered}
 \xymatrix{
 X \ar[d]_f & \bigsqcup _{x\in X^\sharp}\Delta [n_x] \ar[l] \ar[d] \ar[r] & \bigsqcup _{x\in X^\sharp}\Delta [m_x] \ar@{-->}[d] \\
 Y & \bigsqcup _{y\in Y^\sharp}\Delta [n_y] \ar[l] \ar[r] & \bigsqcup _{y\in Y^\sharp}\Delta [m_y]
 }
\end{gathered}
\end{equation}
in which an obvious choice of middle vertical map is $f(x)^\flat$ for each index $x\in X^\sharp$. Here, we write $f(x)=f(x)^\sharp f(x)^\flat$ by means of the Eilenberg-Zilber lemma.

There is at most one dashed map that makes the square
\begin{equation}
\label{eq:third_diagram_description_iterative}
\begin{gathered}
 \xymatrix{
 [n_x] \ar[d]_{f(x)^\flat } \ar[r]^{\rho _x} & [m_x] \ar@{-->}[d] \\
 [n_{f(x)^\sharp }] \ar[r]_{\rho _{f(x)^\sharp }} & [m_{f(x)^\sharp }]
 }
\end{gathered}
\end{equation}
commute as $\rho _x$. We claim that if $\mu _x$ is a section of $\rho _x$, then
\[\rho _{f(x)\sharp }\circ f(x)^\flat \circ \mu _x\]
makes the square commute. This claim holds if
\begin{equation}\label{eq:first_equation_description_iterative}
\rho _{f(x)^\sharp }\circ f(x)^\flat (i)=\rho _{f(x)^\sharp }\circ f(x)^\flat (j)
\end{equation}
whenever
\begin{equation}\label{eq:second_equation_description_iterative}
\rho _x(i)=\rho _x(j).
\end{equation}
If the claim holds, then any other section of $\rho _x$ would yield the same functor $[k_x]\to [k_{f(x)^\sharp }]$. From dashed maps that makes the diagrams (\ref{eq:third_diagram_description_iterative}) commute, we get a dashed map that makes (\ref{eq:second_diagram_description_iterative}) commute. With it arises a map $J(f)$.

Now we argue that (\ref{eq:first_equation_description_iterative}) holds whenever (\ref{eq:second_equation_description_iterative}) does. The degeneracy operator $\rho _x$ corresponds to the equivalence relation on $[n_x]$ that is generated by the reflexive binary relation $\approx$ that is defined in \cref{sec:calculations}. Hence, our claim will follow if $i\approx k$ implies that
\begin{equation}\label{eq:third_equation_description_iterative}
\rho _{f(x)^\sharp }\circ f(x)^\flat (i)=\rho _{f(x)^\sharp }\circ f(x)^\flat (k)
\end{equation}
holds.

Suppose $x\varepsilon _i=x\varepsilon _j$. This implies $f(x)\varepsilon _i=f(x)\varepsilon _j$, which can be rewritten as
\[f(x)^\sharp f(x)^\flat \varepsilon _i=f(x)^\sharp f(x)^\flat \varepsilon _j,\]
which in turn can be rewritten as
\[f(x)^\sharp \varepsilon _{f(x)^\flat (i)}=f(x)^\sharp \varepsilon _{f(x)^\flat (j)}.\]
By definition of $\rho _{f(x)^\sharp }$ it follows that
\[\rho _{f(x)^\sharp }(f(x)^\flat (i))=\rho _{f(x)^\sharp }(f(x)^\flat (j)).\]
Next, suppose $i\leq k\leq j$. In other words, we assume $i\approx k$. Degeneracy operators are order-preserving, so (\ref{eq:third_equation_description_iterative}) holds. This concludes our definition of $J(f)$.

It is clear that $J(id)=id$, for in the case $f=id$ we have that $f(x)^\flat =id$ and $\rho _x=\rho _{f(x)^\sharp }$. It follows that
\[J(g\circ f)=J(g)\circ J(f)\]
from the fact that the square
\begin{displaymath}
\xymatrix{
X \ar[d] \ar[r]^f & Y \ar[d] \\
JX \ar[r]_{J(f)} & JY
}
\end{displaymath}
commutes for each simplicial map $f:X\to Y$ combined with the fact that $X\to JX$ is degreewise surjective for each simplicial set $X$. Thus the construction $JX$ is functorial and the map $X\to JX$ is natural. Because $X\to JX$ is natural and degreewise surjective and because $\eta _X$ is natural, it follows that $JX\to UDX$ is natural.

The plan is to obtain a quotient of $X$ that is isomorphic to $UDX$ by applying $J$ successively. Moreover, we aim to establish \cref{thm:main_result_itdesing}. To arrange for the iteration, we refer to \cref{def:sequence_composition}. Let $f^{0,1}$ be the natural map
\[J^0X=X\to JX=J^1X.\]
Due to (\ref{eq:first_diagram_description_iterative}), we can assume that we for some ordinal $\gamma >1$ have defined a $\gamma$-sequence
\[T^{[0]}\Rightarrow \cdots \Rightarrow T^{[\beta ]}\Rightarrow T^{[\beta +1]}\Rightarrow \cdots\]
of commutative triangles
\begin{equation}
\label{eq:fourth_diagram_description_iterative}
\begin{gathered}
\xymatrix{
X \ar[dr]_{\eta _X} \ar[rr] && J^\beta X \ar[ld]^{p^\beta} \\
& UDX
}
\end{gathered}
\end{equation}
denoted $T^{[\beta ]}$ and natural transformations, in which the component
\[J^\alpha X\xrightarrow{f^{\alpha ,\beta }} J^\beta X\]
of $T^{[\alpha ]}\Rightarrow T^{[\beta ]}$ is a quotient map whenever $\alpha \leq \beta <\gamma$.

If $\gamma$ is a limit ordinal, then we take the colimit in the following way to define $J^\gamma X$. For each $n\geq 0$, let $R_n$ be the equivalence relation on $J^0X=X$ that consists of the elements $(x,y)\in X_n\times X_n$ such that there is some $\beta <\gamma$ with $f^{0,\beta }(x)=f^{0,\beta }(y)$. It is clear that the diagrams (\ref{eq:first_diagram_proof_lem_desing_unique_factorization_through_unit}) commute so that we obtain the quotient $J^\gamma X=X/R$ of $J^0X$. In this case, we automatically get a diagram $T^{[\gamma ]}$ that plays the role of (\ref{eq:fourth_diagram_description_iterative}).

Else if $\gamma =\beta +1$ is a successor of an ordinal $\beta$, then we simply define $J^{\beta +1}X$ by applying $J$ to $J^\beta X$. Consider the solid commutative diagram
\begin{equation}
\label{eq:fifth_diagram_description_iterative}
\begin{gathered}
\xymatrix{
X \ar[dd]_{id} \ar[dr]_{\eta _X} \ar[rr] && J^\beta X \ar[ld]^{p^\beta } \ar[dd]^{f^{\beta ,\beta +1}} \\
& UDX \ar[dd]_(.65){id} \\
X \ar[dr]_{\eta _X} \ar@{-}[r] & \ar[r] & J^{\beta +1}X \ar@{-->}[ld]^{p^{\beta +1}} \\
& UDX
}
\end{gathered}
\end{equation}
in which we have yet to define the dashed map $p^{\beta +1}$. By \cref{prop:role_of_enforcers}, we obtain the dashed map in the solid diagram
\begin{equation}
\label{eq:sixth_diagram_description_iterative}
\begin{gathered}
\xymatrix{
& \bigsqcup _{j\in (J^\beta X)^\sharp}\Delta [n_j] \ar[dd]_{\vee _{j\in (J^\beta X)^\sharp }(\bar{\jmath } )} \ar[rr]^{\sqcup _{j\in (J^\beta X)^\sharp }(\rho _j)} && \bigsqcup _{j\in (J^\beta X)^\sharp}\Delta [m_j] \ar[dd] \\
\\
X \ar[d]_\eta \ar[r]^{f^{0,\beta }} & J^\beta X \ar[ld]^(.45){p^\beta } \ar[d]^\eta \ar[rr]^{f^{\beta ,\beta +1}} && J^{\beta +1}X \ar@{-->}[lld] \ar[d]^\eta \\
UDX \ar[r]_(.45){UD(f^{0,\beta })} & UD(J^\beta X) \ar[rr]_{UD(f^{\beta ,\beta +1})} && UD(J^{\beta +1})
}
\end{gathered}
\end{equation}
in $sSet$, which commutes because $f^{0,\beta }$ is a quotient map and hence degreewise surjective.

The whole diagram (\ref{eq:sixth_diagram_description_iterative}) commutes because $f^{\beta ,\beta +1}$ is degreewise surjective. This implies that $UD(f^{0,\beta })$ and $UD(f^{\beta ,\beta +1})$ are isomorphisms. Hence, from (\ref{eq:sixth_diagram_description_iterative}) we obtain a canonical dashed map $p^{\beta +1}$ in (\ref{eq:fifth_diagram_description_iterative}) that makes the whole diagram commute, including the lower triangle.

We have finished the construction of a $\gamma$-sequence $T:\gamma \to sSet^{[2]}$ for each ordinal $\gamma$. By the design of these sequences, there is a canonical composition of each of them that is a quotient map.

Next, we verify that this iterative process does indeed come to a halt. The proof uses the following observation.
\begin{lemma}\label{lem:no_break_iterative_desingularization}
If $\beta$ is some ordinal and if some $x\in (J^\beta X)^\sharp$ is not embedded, then $f^{\beta ,\beta +1}(x)$ is a degenerate simplex in $J^{\beta +1}X$.
\end{lemma}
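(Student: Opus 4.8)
The plan is to read the conclusion straight off the cocartesian square that defines the enforced collapse, together with the fact recorded just after \cref{def:enforcer} that the enforcer $\rho _x$ is a \emph{proper} degeneracy operator exactly when $x$ is not embedded. Write $W=J^\beta X$ and let $q=f^{\beta ,\beta +1}\colon W\to JW=J^{\beta +1}X$ be the enforced collapse of $W$. By \cref{def:enforced_collapse} the map $q$ is the lower edge of the cocartesian square whose upper edge is the coproduct $\sqcup _{x'\in W^\sharp }(\rho _{x'})$ of the enforcers and whose left edge is the wedge $\vee _{x'\in W^\sharp }(\overline{x'})$ of the representing maps.

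First I would fix the given non-embedded simplex $x\in W^\sharp$ and chase its representing map around this square. Restricting both coproducts to their $x$-indexed component, commutativity of the square says that the composite $\Delta [n_x]\xrightarrow{\bar{x}}W\xrightarrow{q}JW$ equals the composite that first applies $\rho _x\colon \Delta [n_x]\to \Delta [m_x]$ and then the right-hand vertical map $\Delta [m_x]\to JW$. Denote by $v\in (JW)_{m_x}$ the simplex represented by this latter map. By the Yoneda lemma together with the right-action conventions of \cref{sec:prelim}, the displayed factorization reads $\overline{q(x)}=\bar{v}\circ \rho _x$ at the level of representing maps, which is to say $q(x)=v\rho _x$.

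It then remains only to observe that, since $x$ is not embedded, $\rho _x$ is a proper degeneracy operator, so the equality $q(x)=v\rho _x$ exhibits $f^{\beta ,\beta +1}(x)$ as a proper degeneracy of $v$; by the definition of non-degeneracy this forces $f^{\beta ,\beta +1}(x)$ to be degenerate, which is the claim. The argument is deliberately short because all the genuine content has been front-loaded into the construction of $\rho _x$. The one point deserving care is the identification $q(x)=v\rho _x$: one must match the right action of operators on simplices with precomposition of representing maps correctly, and note that it is immaterial whether $v$ is itself non-degenerate, since a proper degeneration of any simplex is degenerate.
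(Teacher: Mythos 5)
Your proposal is correct and follows essentially the same route as the paper: both arguments come down to the observation that the representing map of $f^{\beta ,\beta +1}(x)$ factors through $\rho _x$, which is a proper degeneracy operator precisely because $x$ is not embedded, so that $f^{\beta ,\beta +1}(x)$ is a proper degeneracy of some simplex and hence degenerate. The only inessential difference is that the paper routes this factorization through the intermediate pushout $Y=J^\beta X\sqcup _{\Delta [n_x]}\Delta [m_x]$ along the single enforcer $\rho _x$ before mapping on to $J^{\beta +1}X$, whereas you read the identity $f^{\beta ,\beta +1}(x)=v\rho _x$ directly off the $x$-indexed component of the cocartesian square defining the enforced collapse.
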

\begin{proof}
Consider the diagram
\begin{equation}
\label{eq:first_diagram_proof_lem_no_break_iterative_desingularization}
\begin{gathered}
\xymatrix@=0.7em{
\Delta [n_x] \ar[dd] \ar[rr]^{\rho _x} && \Delta [m_x] \ar@/_1.9pc/[lddd] \ar[dd] \\
\\
\bigsqcup _{j\in (J^\beta X)^\sharp }\Delta [n_j] \ar[dd]_{\vee _{j\in (J^\beta X)^\sharp }{(\bar{\jmath } )}} \ar@{-}[r] & \ar[r] & \bigsqcup _{j\in (J^\beta X)^\sharp }\Delta [m_j] \ar[dd] \\
& Y \ar@{-->}[dr] \\
J^\beta X \ar[ur] \ar[rr]_{f^{\beta ,\beta +1}} && J^{\beta +1}X
}
\end{gathered}
\end{equation}
where we take the pushout
\[Y=J^\beta X\sqcup _{\Delta [n_x]}\Delta [m_x].\]
The quotient map $f^{\beta ,\beta +1}X$ factors through the canonical map $J^\beta X\to Y$. The map $Y\to J^{\beta +1}X$ is then also degreewise surjective. To say that $x$ is not embedded is the same as saying that its vertices are not pairwise distinct, so $\rho _x$ is a proper degeneracy operator. Thus we see that
\[\Delta [n_x]\xrightarrow{\bar{x} } J^\beta X\to Y\]
is the representing map of a degenerate simplex. To precompose this representing map with $Y\to J^{\beta +1}X$ yields the map $f^{\beta ,\beta +1}\circ \bar{x}$, as we see from (\ref{eq:first_diagram_proof_lem_no_break_iterative_desingularization}). It follows that $f^{\beta ,\beta +1}(x)$ is degenerate.
\end{proof}
\begin{proposition}\label{prop:iterative_desingularization_halts}
Let $X$ be a simplicial set. There is an ordinal $\lambda$ such that $J^\lambda X$ is non-singular.
\end{proposition}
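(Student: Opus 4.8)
The plan is to reduce the halting statement to the stabilization of an increasing transfinite chain of equivalence relations on the \emph{fixed} set $X$, rather than tracking the non-degenerate simplices of $J^\beta X$ directly. For each ordinal $\beta$, the composite $f^{0,\beta }:X\to J^\beta X$ is a quotient map, hence degreewise surjective, so $J^\beta X\cong X/R^\beta$, where $R^\beta _n\subseteq X_n\times X_n$ is the degreewise kernel relation $f^{0,\beta }(x)=f^{0,\beta }(x')$. First I would record two monotonicity facts. Since $f^{0,\beta '}=f^{\beta ,\beta '}\circ f^{0,\beta }$ for $\beta \le \beta '$, the relations increase, $R^\beta \subseteq R^{\beta '}$. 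At a limit ordinal $\gamma$ the construction sets $R^\gamma _n=\bigcup _{\beta <\gamma }R^\beta _n$, so the chain is continuous at limit stages; moreover, because $\eta _X=p^\beta \circ f^{0,\beta }$, every $R^\beta$ is contained in the kernel of $\eta _X$, giving an \emph{a priori} upper bound for the chain.

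The main step is termination. I would observe that an increasing chain $(R^\beta )_\beta$ of subsets of the fixed set $S=\bigsqcup _{n\ge 0}X_n\times X_n$ that is continuous at limits must stabilize. Whenever $R^{\beta +1}\ne R^\beta$, the nonempty difference $R^{\beta +1}\setminus R^\beta$ is adjoined and never removed, so the differences taken at distinct successor stages are pairwise disjoint nonempty subsets of $S$; there can be at most $\abs{S}$ of them, and by continuity at limits no new pairs are introduced at limit stages. Consequently the successor stages at which the chain strictly grows form a set, and there is an ordinal $\lambda$ with $R^\lambda =R^{\lambda +1}$.

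It remains to convert stabilization into non-singularity. Since $R^\lambda =R^{\lambda +1}$, the canonical quotient map $f^{\lambda ,\lambda +1}:J^\lambda X=X/R^\lambda \to X/R^{\lambda +1}=J^{\lambda +1}X$ is an isomorphism. If $J^\lambda X$ were singular, it would carry a non-embedded non-degenerate simplex $x$, and \cref{lem:no_break_iterative_desingularization} would force $f^{\lambda ,\lambda +1}(x)$ to be degenerate in $J^{\lambda +1}X$; but an isomorphism of simplicial sets sends non-degenerate simplices to non-degenerate simplices, a contradiction. Hence $J^\lambda X$ is non-singular, which is the claim.

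I expect the genuine obstacle to be the termination step, precisely because \cref{ex:infinitely_many_enforced_collapses_needed} rules out any finite bound on $\lambda$, and a naive count of non-degenerate simplices fails once $X^\sharp$ is infinite: making a single non-embedded simplex degenerate need not decrease an infinite cardinality. The resolution is to measure progress by the kernel relations inside the set-sized lattice $\mathcal{P}(S)$ instead of by the simplices themselves, so that monotonicity together with continuity at limit stages forces stabilization irrespective of any cardinalities, after which \cref{lem:no_break_iterative_desingularization} supplies the final identification of a fixed point with a non-singular simplicial set.
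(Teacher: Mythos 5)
Your proof is correct, but it takes a genuinely different route from the paper's. The paper argues directly on simplices: whenever $J^\gamma X$ is singular it chooses a witness $x^\gamma \in X^\sharp$ whose image under $f^{0,\gamma }$ is non-embedded and non-degenerate, and then uses \cref{lem:no_break_iterative_desingularization} to show these witnesses are pairwise distinct \dash once a simplex's image has been made degenerate it stays degenerate under all later quotient maps, so the witness chosen at stage $\gamma$ cannot coincide with any earlier witness. Taking a cardinal $\lambda$ strictly larger than the cardinality of $X^\sharp$, the choice must fail at some stage $\leq \lambda$, which forces $J^\lambda X$ to be non-singular. You instead work in the lattice of relations on the fixed graded set $\bigsqcup _n X_n\times X_n$: the kernel relations $R^\beta$ of the quotient maps $f^{0,\beta }$ form a monotone (and, at limits, continuous) chain, a monotone chain of subsets of a fixed set must have a stage with $R^\lambda =R^{\lambda +1}$, and at such a stage $f^{\lambda ,\lambda +1}$ is degreewise bijective, hence an isomorphism; \cref{lem:no_break_iterative_desingularization} then rules out any non-embedded non-degenerate simplex of $J^\lambda X$, since its image would have to be simultaneously degenerate and non-degenerate. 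Both arguments pivot on \cref{lem:no_break_iterative_desingularization} and on a set-sized bound, and both correctly accommodate \cref{ex:infinitely_many_enforced_collapses_needed}, which shows transfinitely many stages can be needed. What your version buys is robustness and familiarity: it is the standard stabilization argument for transfinite iterations (in the style of the small object argument), it never needs to track which particular simplices get collapsed, and it does not even need the limit-continuity you mention (monotonicity alone yields the disjoint-differences count). What the paper's version buys is a sharper and more geometric bound: the halting ordinal is controlled by the cardinality of $X^\sharp$ rather than of $\bigsqcup _n X_n\times X_n$, with a concrete accounting of the mechanism \dash each singular stage permanently consumes a fresh non-degenerate simplex of $X$.
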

\begin{corollary}\label{cor:iterative_desingularization_halts}
Let $X$ be a simplicial set. There is an ordinal $\lambda$ such that the map
\[p^\lambda :J^\lambda X\xrightarrow{\cong } UDX\]
is an isomorphism.
\end{corollary}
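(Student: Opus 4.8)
The plan is to treat \cref{cor:iterative_desingularization_halts} as a formal consequence of \cref{prop:iterative_desingularization_halts} and the universal property of the unit recorded in \cref{lem:desing_unique_factorization_through_unit}. First I would invoke \cref{prop:iterative_desingularization_halts} to fix an ordinal $\lambda$ for which $J^\lambda X$ is non-singular. The construction preceding the statement already provides the map $p^\lambda:J^\lambda X\to UDX$ fitting into the commutative triangle (\ref{eq:fourth_diagram_description_iterative}) with $\beta=\lambda$, so that
\[
\eta_X=p^\lambda\circ f^{0,\lambda},
\]
where $f^{0,\lambda}:X\to J^\lambda X$ is the canonical degreewise surjective map of the construction. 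Both $f^{0,\lambda}$ and $\eta_X$ are degreewise surjective, hence epimorphisms of $sSet$, a fact I will use twice.

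Next I would build a two-sided inverse of $p^\lambda$. Since $J^\lambda X$ is non-singular, \cref{lem:desing_unique_factorization_through_unit} applied to $f^{0,\lambda}:X\to J^\lambda X$ yields a unique simplicial map $g:UDX\to J^\lambda X$ with $f^{0,\lambda}=g\circ\eta_X$. I then verify that $g$ is inverse to $p^\lambda$ on both sides. Precomposing $p^\lambda\circ g$ with $\eta_X$ gives
\[
p^\lambda\circ g\circ\eta_X=p^\lambda\circ f^{0,\lambda}=\eta_X,
\]
so $p^\lambda\circ g$ and $\mathrm{id}_{UDX}$ agree after the epimorphism $\eta_X$, whence $p^\lambda\circ g=\mathrm{id}_{UDX}$. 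Symmetrically, precomposing $g\circ p^\lambda$ with $f^{0,\lambda}$ gives
\[
g\circ p^\lambda\circ f^{0,\lambda}=g\circ\eta_X=f^{0,\lambda},
\]
so $g\circ p^\lambda$ and $\mathrm{id}_{J^\lambda X}$ agree after the epimorphism $f^{0,\lambda}$, whence $g\circ p^\lambda=\mathrm{id}_{J^\lambda X}$. Therefore $p^\lambda$ is an isomorphism, as claimed.

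Within the corollary proper I expect no genuine obstacle: once non-singularity of $J^\lambda X$ is available, the argument is the routine ``epimorphism plus a universal factorization forces the comparison map to be invertible'' manipulation, and the only delicate point is bookkeeping which of the two epimorphisms cancels which composite. The substantive difficulty lies one level below, in \cref{prop:iterative_desingularization_halts}, where one must show the transfinite iteration actually terminates; there \cref{lem:no_break_iterative_desingularization} guarantees that a non-embedded simplex is genuinely collapsed at the following stage so the process cannot stall, and a cardinality bound on the simplices of $X$ then pins down an ordinal $\lambda$ at which $J^\lambda X$ is non-singular. I note that one could alternatively read invertibility of $p^\lambda$ off \cref{lem:pushout_along_enforcers_intermediate_desingularization}, but routing through \cref{lem:desing_unique_factorization_through_unit} is cleaner and spares us from revisiting the successor and limit stages separately.
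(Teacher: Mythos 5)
Your proof is correct, but it takes a genuinely different route from the paper's. The paper applies \cref{prop:iterative_desingularization_halts} to get an ordinal $\kappa$ with $J^\kappa X$ non-singular and then passes to the \emph{successor} stage: it invokes \cref{lem:pushout_along_enforcers_intermediate_desingularization} to see that $J^{\kappa +1}X\to UD(J^\kappa X)$ is an isomorphism, observes that this map equals $\eta _{J^\kappa X}$ because $f^{\kappa ,\kappa +1}$ is the identity, and then recalls that $p^{\kappa +1}$ was \emph{defined} in (\ref{eq:sixth_diagram_description_iterative}) as the composite of $\eta _{J^\kappa X}$ with the inverse of the isomorphism $UD(f^{0,\kappa })$; it concludes with $\lambda =\kappa +1$. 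You instead work directly at the stage where non-singularity first holds, and argue purely formally: the universal property of \cref{lem:desing_unique_factorization_through_unit} applied to the quotient map $f^{0,\lambda }$ produces a candidate inverse $g$, and the two epimorphisms $\eta _X$ and $f^{0,\lambda }$ cancel to show $g$ is a two-sided inverse of $p^\lambda$. Your argument is more self-contained: it needs only the commutative triangle (\ref{eq:fourth_diagram_description_iterative}), degreewise surjectivity of the two maps, and the factorization lemma, and it avoids re-entering the successor-stage construction (in particular, it does not need the nontrivial fact that $UD(f^{0,\kappa })$ is an isomorphism). It also yields the isomorphism at the ordinal $\kappa$ itself rather than at $\kappa +1$, a marginally sharper conclusion, though immaterial for the statement. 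What the paper's route buys in exchange is the explicit identification $J^{\kappa +1}X\cong UD(J^\kappa X)$, which makes visible that the iteration stabilizes immediately once non-singularity is reached, and it reuses the machinery of \cref{lem:pushout_along_enforcers_intermediate_desingularization} that drives the rest of the section. Your side remarks are also accurate: the real content does sit in \cref{prop:iterative_desingularization_halts} and \cref{lem:no_break_iterative_desingularization}, and the alternative you mention of reading invertibility off \cref{lem:pushout_along_enforcers_intermediate_desingularization} is essentially what the paper does, except that lemma applies to a single pushout along enforcers, which is why the paper must argue at the successor stage rather than at $f^{0,\lambda }$ directly.
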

\begin{proof}[Proof of \cref{cor:iterative_desingularization_halts}.]
Use \cref{prop:iterative_desingularization_halts} to choose an ordinal $\kappa$ such that $J^\kappa X$ is non-singular.

According to \cref{lem:pushout_along_enforcers_intermediate_desingularization}, the canonical map $J^{\kappa +1}X\xrightarrow{\cong } UD(J^\kappa X)$ is an isomorphism as $J^{\kappa +1}X$ is non-singular, which is in turn because $f^{\kappa ,\kappa +1}$ is the identity. Recall the successor ordinal step from the construction of $T$ and replace $\beta$ with $\kappa$ in the diagram (\ref{eq:sixth_diagram_description_iterative}).

As $f^{\kappa ,\kappa +1}$ is the identity, it follows that the isomorphism above is in fact equal to $\eta _{J^\kappa X}$. The map $J^{\kappa +1}X\to UDX$ is by design equal to the composite
\[J^{\kappa +1}X=J^\kappa X\xrightarrow{\eta _{J^\kappa X}} UD(J^\kappa X)\to UDX.\]
The first half $\eta _{J^\kappa X}$ of the composite above is an isomorphism by the choice of $\kappa$ and the second half is the inverse of
\[UD(f^{0,\kappa }):UDX\to UD(J^\kappa X)\]
If we define $\lambda =\kappa +1$, then the proof is finished.
\end{proof}
\begin{proof}[Proof of \cref{prop:iterative_desingularization_halts}.]
The idea of the proof is that we can index the simplicial sets $J^\beta X$ that are singular by a certain subset of the non-degenerate simplices of $X$.

If $J^0X=X$ is already non-singular, then we can let $\lambda =0$. Else if $X$ is singular, then we choose a non-embedded non-degenerate simplex $x^0$ of $X$. Suppose $\gamma >0$ is such that we for all $\beta$ with $\beta <\gamma$ have defined $x^\beta$ with $x^\alpha \neq x^\beta$ if $\alpha <\beta <\gamma$.

If $J^\gamma X$ is non-singular, then we define $\lambda =\gamma$. Else if $J^\gamma X$ is singular, then we choose a simplex $x^\gamma$ of $X$ such that $f^{0,\gamma }(x^\gamma )$ is a non-embedded non-degenerate simplex. Suppose $\beta$ an ordinal with $\beta <\gamma$. From the commutative diagram
\begin{equation}
\label{eq:first_diagram_proof_prop_iterative_desingularization_halts}
\begin{gathered}
 \xymatrix{
 X \ar[dr]_{f^{0,\beta }} \ar[rr]^{f^{0,\gamma }} && J^\gamma X \\
 & J^\beta X \ar[ur]^{f^{\beta ,\gamma }} \ar[rr]_{f^{\beta ,\beta +1}} && J^{\beta +1}X \ar[lu]_{f^{\beta +1,\gamma }}
 }
 \end{gathered}
\end{equation}
we will conclude that
\begin{equation}\label{eq:first_equation_proof_prop_iterative_desingularization_halts}
x^\beta \neq x^\gamma
\end{equation}
in the following way.

Define
\begin{displaymath}
\begin{array}{rcl}
y & = & f^{0,\beta }( x^\beta) \\
y' & = & f^{\beta ,\beta +1}(y) \\

\end{array}
\end{displaymath}
As $y'$ is degenerate by \cref{lem:no_break_iterative_desingularization}, it follows that $f^{\beta +1,\gamma }(y')$ is degenerate. Because the diagram (\ref{eq:first_diagram_proof_prop_iterative_desingularization_halts}) commutes, this simplex is equal to
\[f^{\beta +1,\gamma }(y')=f^{\beta ,\gamma }(y)=f^{0,\gamma }(x^\beta ).\]
On the other hand, the simplex $f^{0,\gamma }(x^\gamma )$ is non-degenerate, so, as announced, it follows that (\ref{eq:first_equation_proof_prop_iterative_desingularization_halts}) holds.

Let $\lambda$ be a cardinal that is strictly greater than the cardinality of $X^\sharp$. Define $S$ as the set consisting of those $x^\beta$ with $\beta \leq \lambda$. This is a subset of $X^\sharp$. Then we can consider the injective function $S\to \lambda +1$ defined by $x^\beta \mapsto \beta$. If $\alpha <\beta$, then $x^\alpha$ is defined if $x^\beta$ is. In other words, $\alpha$ is in the image of $S\to \lambda +1$ if $\beta$ is.

By the choice of $\lambda$, there does not exist a surjective extension
\begin{displaymath}
\xymatrix{
S \ar[dd] \ar[dr] \\
& \lambda +1 \\
X^\sharp \ar@{-->>}[ur]_\nexists
}
\end{displaymath}
of $S\to \lambda +1$ to $X^\sharp$. Therefore, the function $S\to \lambda +1$ cannot possibly be surjective. Hence, the element $\lambda$ is not in the image of the latter function. By the definition of $S$, it follows that $x^\lambda$ is not defined. This implies that the set $S$ contains every element in $X^\sharp$ with a designation $x^\beta$. This shows that $J^\lambda X$ is non-singular.
\end{proof}
\begin{proof}[Proof of \cref{thm:main_result_itdesing}.]
Use \cref{cor:iterative_desingularization_halts} to choose an ordinal $\lambda$ such that $p^\lambda$ is an isomorphism. Take the corresponding $\lambda$-sequence $T$ of triangles (\ref{eq:fourth_diagram_description_iterative}) from the family of sequences constructed above. The map $f^{0,\lambda }$ is the composition of the $\gamma$-sequence
\[J^0X\xrightarrow{f^{0,1}} \cdots \to J^\beta X\xrightarrow{f^{\beta ,\beta +1}} \cdots\]
by the design of $J^{\lambda +1}$. Because $p^\lambda$ is an isomorphism, the commutative triangle $T^{[\lambda ]}$ identifies $f^{0,\lambda }$ with $\eta _X$.
\end{proof}

\bibliographystyle{unsrt}  
\bibliography{main}

\end{document}